\documentclass[11pt,reqno]{amsart}
\usepackage{amsopn}
\usepackage{amssymb, amscd}
\usepackage{latexsym,bm}
\usepackage{amsmath}
\usepackage[english]{babel}
\usepackage{mathrsfs}
\usepackage{graphicx}
\usepackage{calligra}

\usepackage{tikz}
\usepackage{subfigure}

\usepackage{pgfplots}

\usepackage{MnSymbol}




%
%




\newcommand{\nc}{\newcommand}

\nc{\reales}{\mathbb{R}}
\nc{\SR}{\mathbb{R}}
\nc{\SN}{\mathbb{N}}

\nc{\sa}{S_\alpha}      \nc{\pa}{P_\alpha}     \nc{\qa}{Q_\alpha}
\nc{\ha}{H_\alpha}

\nc{\lp}{L^p (\omega)}                               \nc{\normlp}{\raisebox{-0.1cm}{\scriptsize{$L^p (\omega)$}}}
\nc{\lpp}{L^p (\omega^{-p})}                         \nc{\normlpp}{\raisebox{-0.1cm}{\scriptsize{$L^p (\omega^{-p})$}}}
\nc{\lloc}{L_{loc}^{1} (\mathbb{R}^n)}
\nc{\linfty}{L^{\infty} (\omega^{-1})}
\nc{\bmogamma}{BMO^{\gamma}(\omega)}                 \nc{\normbmogamma}{\raisebox{-0.1cm}{\scriptsize{$BMO^{\gamma}(\omega)$}}}
\nc{\bmodelta}{BMO^{\delta}(\omega)}                 \nc{\normbmodelta}{\raisebox{-0.1cm}{\scriptsize{$BMO^{\delta}(\omega)$}}}
\nc{\bmgcero}{BM^{\gamma}_{0} (\omega)}              \nc{\normbmgcero}{\raisebox{-0.1cm}{\scriptsize{$BM^{\gamma}_{0} (\omega)$}}}
\nc{\bmdcero}{BM^{\delta}_{0} (\omega)}              \nc{\normbmdcero}{\raisebox{-0.1cm}{\scriptsize{$BM^{\delta}_{0} (\omega)$}}}

\nc{\Rhp}{RH (p)} \nc{\rhp}{RH_0 (p)} \nc{\rhpp}{RH_0 (p')}
\nc{\deta}{D_{\eta}} \nc{\detacero}{D_{0,\eta}}
\nc{\detap}{D_{\eta'}}

\nc{\essinf}{\hbox{ess}\inf}
\nc{\supp}{\operatorname{Supp}}

\nc{\charac}{\raisebox{1pt}{$\chi$}}
\nc{\characmed}{\raisebox{0.07cm}{$\chi$}}
\nc{\characdos}{\raisebox{2pt}{$\chi$}}

\nc{\vsdos}{\vspace{.2cm}}
\nc{\vsuno}{\vspace{1cm}}


\nc{\normluno}{\raisebox{-0.1cm}{\scriptsize{$L^1 (\omega)$}}}
\nc{\normlinfty}{\raisebox{-0.1cm}{\scriptsize{$L^{\infty} (\omega^{-1})$}}}
\nc{\normbmo}{\raisebox{-0.1cm}{\scriptsize{$BMO(\omega)$}}}

\nc{\normbmcero}{\raisebox{-0.1cm}{\scriptsize{$BM_0 (\omega)$}}}
\nc{\normbmocero}{\raisebox{-0.1cm}{\scriptsize{$BMO_0 (\omega)$}}}
                     \nc{\normbmogcero}{\raisebox{-0.1cm}{\scriptsize{$BMO^{\gamma}_{0} (\omega)$}}}
                    \nc{\normbmodcero}{\raisebox{-0.1cm}{\scriptsize{$BMO^{\delta}_{0} (\omega)$}}}


\numberwithin{equation}{section}

\theoremstyle{plain}
\newtheorem{theorem}{Theorem}[section]
\newtheorem{lemma}[theorem]{Lemma}
\newtheorem{corollary}[theorem]{Corollary}

\theoremstyle{definition}
\newtheorem{definition}[theorem]{Definition}

\theoremstyle{remark}
\newtheorem{remark}[theorem]{Remark}


\renewcommand {\epsilon} {{\varepsilon}}

\newcommand{\R}{\mathbb R}

\newcommand {\al} {{\alpha}}
\newcommand {\dt} {{\delta}}
\newcommand {\Dt} {{\Delta}}
\newcommand {\e} {{\varepsilon}}
\newcommand {\ga} {{\gamma}}
\newcommand {\Ga} {{\Gamma}}

\renewcommand {\O} {{\Omega}}
\renewcommand {\phi} {{\varphi}}
\newcommand {\la} {{\lambda}}
\newcommand {\La} {{\Lambda}}

\newcommand {\bp} {{\bar p}}
\newcommand {\bC} {{\mathscr{C}}}

\newcommand{\Ba}[1]{\begin{array}{#1}}
\newcommand{\Ea}{\end{array}}
\newcommand{\Be}{\begin{equation}}
\newcommand{\Ee}{\end{equation}}
\newcommand{\Bea}{\begin{eqnarray}}
\newcommand{\Eea}{\end{eqnarray}}
\newcommand{\Beas}{\begin{eqnarray*}}
\newcommand{\Eeas}{\end{eqnarray*}}
\newcommand{\Benu}{\begin{enumerate}}
\newcommand{\Eenu}{\end{enumerate}}

\renewcommand {\mid} {{\,\,\,\colon\,\,\,}}
\newcommand {\mand} {{\quad\mbox{and}\quad}}

\newcommand{\sline}{{\smallskip

\noindent}}

\newcommand{\T}{{\mathscr T}}
\newcommand{\Th}{{\T_{\rm H}}}
\newcommand{\Tou}{{\T_{\rm OU}}}
\newcommand{\sT}{ \mathscr{T}^{\rm sub} }
\newcommand{\sTh}{{\sT_{\rm H}}}
\newcommand{\sTou}{{\sT_{\rm OU}}}
\newcommand{\supT}{ \mathscr{T}^{\rm sup} }
\newcommand{\supTh}{{\supT_{\rm H}}}
\newcommand{\supTou}{{\supT_{\rm OU}}}
\newcommand{\tx}{\tilde{x}}
\newcommand{\tit}{\tilde{t}}
\newcommand{\ts}{\tilde{s}}
\newcommand{\ty}{\tilde{y}}

\newcommand{\Etz}{E_{t_0}}

\newcommand {\ProofEnd} {
             \begin{flushright} \vskip -0.2in $\Box$ \end{flushright}}


\def\Xint#1{\mathchoice
{\XXint\displaystyle\textstyle{#1}}%
{\XXint\textstyle\scriptstyle{#1}}%
{\XXint\scriptstyle\scriptscriptstyle{#1}}%
{\XXint\scriptscriptstyle\scriptscriptstyle{#1}}%
\!\int}
\def\XXint#1#2#3{{\setbox0=\hbox{$#1{#2#3}{\int}$ }
\vcenter{\hbox{$#2#3$ }}\kern-.6\wd0}}

\def\mint{\Xint-}


\title[Mean value formulas]{Mean value formulas for  Ornstein-Uhlenbeck and Hermite temperatures}

\author{Guillermo Flores and Gustavo Garrig\'os$^*$}
\address{G. J. Flores, CIEM-FaMAF, Universidad Nacional de C\'ordoba, Av. Medina Allende s/n, Ciudad Universitaria, CP:X5000HUA C\'ordoba, Argentina}
\email{gflores@famaf.unc.edu.ar}
\address{G. Garrig\'os, Departamento de Matem\'aticas, Universidad de Murcia, 30100, Espinardo, Murcia, Spain}
\email{gustavo.garrigos@um.es}
\thanks{First author partially supported by grants from CONICET, SeCyT (Universidad Nacional de C\'ordoba) and
        Universidad Nacional del Litoral (Argentina). Second author partially supported by grants MTM2016-76566-P, MTM2017-83262-C2-2-P 
				and Programa Salvador de Madariaga PRX18/451 from Micinn (Spain), and grant 20906/PI/18  from Fundaci\'on S\'eneca (Regi\'on de Murcia, Spain).
				}
\thanks{$^*$\emph{Corresponding author:} Gustavo Garrig\'os, \emph{Email:} gustavo.garrigos@um.es} 

\keywords{Mean value formula, maximum principle, uniqueness, Harnack inequality, Hermite function, Ornstein-Uhlenbeck semigroup}

\subjclass[2010]{31C05, 35B05, 35B50, 35K05, 47D06. }

\begin{document}

\maketitle

\begin{abstract}
We obtain explicit mean value formulas for the solutions of the diffusion equations associated with the 
 Ornstein-Uhlenbeck and Hermite operators. From these, we derive various useful properties, such as maximum principles, uniqueness theorems and Harnack-type inequalities.
\end{abstract}


\section{Introduction}\label{introduction}
\indent Given an open set $E\subset \SR^{n}\times\SR$, we denote by $C^{2,1} (E)$ the set of real-valued functions 
$u(x,t)$ on $E$ such that the partial
derivatives $\partial_t u $  and $\partial^2_{x_i,x_j} u $, $1\leq i, j\leq n$,
all exist and are continuous on $E$. \\
\indent We say that $u \in C^{2,1} (E)$ is a \emph{temperature in $E$}, denoted $u\in\T(E)$, if
\begin{align*}
{\partial_t u} = \Dt u =\sum_{i = 1}^{n} {\partial^2_{x_i,x_i} u}, \quad \hbox{ $(t,x)\in E$},
\end{align*}
that is, if $u(x,t)$ solves the classical heat equation in the domain $E$.

\indent In this paper we shall be interested in two variants of the above PDE, namely the \emph{Ornstein-Uhlenbeck heat equation}, given by
\begin{align} \label{OU}
{\partial_t U} = 
\Big(\Dt-2x\cdot\nabla\Big)U,
\end{align}
and the \emph{Hermite heat equation}, given by
\begin{align} \label{Hermite}
{\partial_t U} = 
\Big(\Dt-|x|^2\Big)U.
\end{align}
Functions $U \in C^{2,1} (E)$ satisfying  \eqref{OU} or \eqref{Hermite} will be called OU-temperatures or H-temperatures, and the corresponding classes will be denoted by  $\Tou(E)$ and $\Th(E)$, respectively.

The goal of this paper is to provide \emph{explicit} mean value formulas for functions in $\Tou$ and  $\Th$,
which are similar to the mean value formulas for classical temperatures in $\T$ due to Watson; see \cite{watson73,watsonbook} or \cite[p. 53]{evans}. Namely, we shall prove the following theorem, which seems to be new in these settings.

\begin{theorem}\label{main}
There exists a family of bounded open sets $\Xi(x,t;r)$, for $(x,t)\in\R^{n+1}$ and $r>0$, and positive kernels $K^{\rm OU}_{x,t}(y,s)$ and $K^{\rm H}_{x,t}(y,s)$ in $C^\infty\big(\R^n\times(-\infty,t)\big)$, such that the following properties are equivalent:
\Benu
\item[(a)] $U\in\Tou(E)$

\medskip

\item[(b)] for every $\;{\overline{\Xi}}(x,t;r)\subset E$ it holds
\[
U(x,t)= \frac{1}{(4\pi r)^{\frac n2}} \iint_{\Xi(x,t;r)} U(y,s)\, K^{\rm OU}_{x,t} (y,s)\, dy ds.\label{Uxt}
\]
\Eenu
The same equivalence holds for $U\in\Th(E)$ if  $K^{\rm OU}_{x,t}$ is replaced by $K^{\rm H}_{x,t}$.
\end{theorem}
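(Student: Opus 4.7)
My plan is to reduce Theorem \ref{main} to the classical Watson mean value formula by means of two explicit diffeomorphisms that conjugate the OU and Hermite heat operators to the standard heat operator. For the Ornstein-Uhlenbeck case, a direct chain-rule computation shows that $\Phi(x,t):=\bigl(e^{-2t}x,\,(1-e^{-4t})/4\bigr)$ is a $C^\infty$ diffeomorphism of $\R^n\times\R$ onto $\R^n\times(-\infty,1/4)$, and that $U\in\Tou(E)$ if and only if $u:=U\circ\Phi^{-1}\in\T(\Phi(E))$; matching the $x\cdot\nabla_y u$ and $\Delta_y u$ terms under $y=e^{-2t}x$, $\tau=(1-e^{-4t})/4$ gives exactly the identification $(\partial_t-\Delta+2x\cdot\nabla)U=0 \Leftrightarrow (\partial_\tau-\Delta_y)u=0$.

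For the Hermite case the substitution must include an exponential prefactor. Looking for $U(x,t)=e^{a(t)|x|^2+b(t)}u(\phi(t)x,\psi(t))$ and matching coefficients of $u$, $|x|^2 u$, $x\cdot\nabla_y u$, $\Delta_y u$ in $(\partial_t-\Delta+|x|^2)U=0 \Leftrightarrow (\partial_\tau-\Delta_y)u=0$ produces the ODE system $a'=4a^2-1$, $b'=2an$, $\phi'=4a\phi$, $\psi'=\phi^2$, whose solutions with $a(0)=b(0)=0$, $\phi(0)=1$, $\psi(0)=0$ are elementary hyperbolic functions and yield
\begin{equation*}
U(x,t) \;=\; (\cosh 2t)^{-n/2}\, \exp\!\Big(-\tfrac{|x|^2\tanh 2t}{2}\Big)\, u\!\left(\tfrac{x}{\cosh 2t},\;\tfrac{\tanh 2t}{2}\right).
\end{equation*}
The associated map $\Psi(x,t):=(x/\cosh 2t,\tanh(2t)/2)$ is then a $C^\infty$ diffeomorphism of $\R^n\times\R$ onto $\R^n\times(-\tfrac12,\tfrac12)$ that intertwines $\Th$ with $\T$.

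Once both reductions are in hand, I would invoke Watson's theorem for classical temperatures: $u\in\T(E')$ if and only if, for every closed heat ball $\overline{E}(y_0,\tau_0;r)\subset E'$,
\begin{equation*}
u(y_0,\tau_0) \;=\; \frac{1}{(4\pi r)^{n/2}} \iint_{E(y_0,\tau_0;r)} u(y,\tau)\,\frac{|y_0-y|^2}{(\tau_0-\tau)^2}\, dy\, d\tau,
\end{equation*}
where $E(y_0,\tau_0;r)$ is the super-level set of the Euclidean heat kernel at height $(4\pi r)^{-n/2}$. Apply this to $u$ and pull back through $\Phi^{-1}$ or $\Psi^{-1}$: define $\Xi(x,t;r):=\Phi^{-1}\!\bigl(E(\Phi(x,t);r)\bigr)$, respectively with $\Psi^{-1}$, which is open, bounded, and contained in $E$ whenever $\overline{E}(\Phi(x,t);r)\subset\Phi(E)$. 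The change of variables, after absorbing the Jacobian and, in the Hermite case, the Gaussian prefactor, delivers positive smooth kernels $K^{\rm OU}_{x,t}(y,s)$ and $K^{\rm H}_{x,t}(y,s)$ on $\R^n\times(-\infty,t)$ of the required form. The converse implication (b)$\Rightarrow$(a) follows by running the argument backwards and using the converse half of Watson's theorem applied to $u$.

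The main technical obstacle I anticipate is not conceptual but computational. In the Hermite case, one must combine the Gaussian prefactor, the Watson density $|y_0-y|^2/(\tau_0-\tau)^2$, and the Jacobian of $\Psi^{-1}$ into a clean positive expression, and, if possible, identify $\Xi(x,t;r)$ explicitly as a super-level set $\{(y,s):\Gamma^{\rm H}_{x,t}(y,s)>(4\pi r)^{-n/2}\}$ of a Mehler-type fundamental solution. This reduces to careful bookkeeping with hyperbolic functions and should present no genuine analytic difficulty beyond the algebra.
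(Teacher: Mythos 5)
Your Ornstein--Uhlenbeck half is essentially the paper's argument: the map $(x,t)\mapsto\bigl(e^{-2t}x,(1-e^{-4t})/4\bigr)$ is exactly the diffeomorphism $\varphi$ of \eqref{phi}, the balls $\Xi(x,t;r)=\varphi^{-1}\bigl(\Omega(\varphi(x,t);r)\bigr)$ are those of \eqref{Xiball}, and the kernel arises from Watson's density times the Jacobian $e^{-2(n+2)s}$, as in \eqref{KKphi}. Where you genuinely diverge is the Hermite half. You conjugate $\partial_t-\Delta+|x|^2$ directly to the heat operator via the Mehler-type substitution $U(x,t)=(\cosh 2t)^{-n/2}e^{-|x|^2\tanh(2t)/2}\,u(x/\cosh 2t,\tanh(2t)/2)$; your ODE system and its solution are correct, and this route does yield a valid mean value characterization of $\Th$. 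The paper instead goes in two steps, Hermite $\to$ OU via the elementary gauge factor $V=e^{-nt}e^{-|x|^2/2}U$ of \eqref{VU}, and then OU $\to$ heat via $\varphi$. What the paper's route buys, and what your route loses, is the literal content of the theorem: with your map $\Psi(x,t)=(x/\cosh 2t,\tanh(2t)/2)$ the Hermite ``balls'' are $\Psi^{-1}\bigl(\Omega(\Psi(x,t);r)\bigr)$, which are \emph{not} the same sets as the OU balls $\varphi^{-1}\bigl(\Omega(\varphi(x,t);r)\bigr)$. The theorem (and the paper's explicit emphasis after its statement) asserts that a \emph{single} family $\Xi(x,t;r)$ works for both equations, with only the kernel changing by the factor $e^{(s-t)n}e^{(|y|^2-|x|^2)/2}$ as in \eqref{KH}. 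So as written you prove a slightly weaker statement (a mean value formula for $\Th$ over a different family of sets). The fix is cheap: having established the OU formula, do not reconjugate to the heat equation at all, but simply substitute $U=e^{tn}e^{|x|^2/2}V$ into \eqref{Uxt}; the prefactors pass through the integral and produce $K^{\rm H}_{x,t}$ over the \emph{same} $\Xi(x,t;r)$. Your anticipated ``bookkeeping with hyperbolic functions'' then disappears entirely.
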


The explicit expressions for the kernels and ``balls'' are given below; see \eqref{KOU}, \eqref{KH} and Figure \ref{fig1}. 
 We emphasize that the family $\Xi(x,t;r)$ is the same for both, OU and H temperatures, and only 
the kernels change slightly.
This result may be used as a starting point to establish several classical properties,
such as strong maximum principles, uniqueness theorems or Harnack inequalities, for functions in $\Tou$ and $\Th$. 

In particular, we shall prove below the following uniqueness theorem with an optimal unilateral growth condition, which seems new in this generality.

\begin{theorem} \label{th_uni1}
Let $0<T_0\leq \infty$, and let $U\in\Tou(\SR^n\times(0,T_0))$ be continuous in $\SR^n\times[0,T_0)$
and with $U(x,0)\equiv 0$. Suppose additionally that for some $A>0$ it holds
\Be
U(x,t)\leq \,A \,e^{|x|\,p(|x|)},\quad \forall\;|x|\geq1,\;\forall\;t\in(0,T_0)
\label{uniTa}
\Ee
where $p(r)$ is a positive continuous function, such that $r^\ga p(r)$ is non-decreasing for some $\ga\geq0$, and
\[
\int_1^\infty\frac{dr}{p(r)}=\infty.
\]  
Then, necessarily $U\equiv0$ in $\SR^n\times[0,T_0)$.

Conversely, for every such $p(r)$ with $\int_1^\infty\frac{dr}{p(r)}<\infty$, there exists a function $U\in\Tou(\SR^n\times\SR)$ with $U(x,0)\equiv0$, $U\not\equiv0$
and 
\[
|U(x,t)|\leq e^{|x|p(|x|\vee 1)},\quad \forall\;x\in\SR^n,\;t\in(0,\infty).
\]
Finally, both statements hold as well when $\Tou$ is replaced by $\Th$.
\end{theorem}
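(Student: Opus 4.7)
My plan is to reduce the uniqueness statements for $\Tou$ and $\Th$ to a unilateral Täcklind-type uniqueness for the classical heat equation, and to derive the Hermite case from the OU case via the ground-state transformation.

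For the OU case I would use the change of variables
\[
V(y,\tau):=U(e^{2t}y,t), \qquad \tau=\tfrac{1}{4}(1-e^{-4t}),
\]
which by a direct chain-rule computation converts $U\in\Tou(\SR^n\times(0,T_0))$ into $V\in\T(\SR^n\times(0,\tau_0))$, with $\tau_0=\tfrac{1}{4}(1-e^{-4T_0})\le 1/4$. Since $x=(1-4\tau)^{-1/2}y$, the hypothesis $U(x,t)\le Ae^{|x|p(|x|)}$ becomes $V(y,\tau)\le A\,e^{c(\tau)|y|\,p(c(\tau)|y|)}$ with $c(\tau)=(1-4\tau)^{-1/2}$. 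Fixing $\tau_1<\tau_0$ and $c=c(\tau_1)$, I would set $g(r):=\sup_{c'\in[1,c]}c'p(c'r)$, so that $V(y,\tau)\le A\,e^{|y|g(|y|)}$ on $\SR^n\times[0,\tau_1]$. The hypothesis ``$r^\ga p(r)$ non-decreasing'' then yields both that $r^\ga g(r)$ is non-decreasing and that $g(r)\le c^{1+\ga}p(cr)$, whence $\int^\infty dr/g(r)\ge c^{-2-\ga}\int^\infty ds/p(s)=\infty$. A preparatory unilateral Täcklind theorem for $\T$---proved via the weak maximum principle that follows from Theorem \ref{main}---then forces $V\equiv 0$ on $[0,\tau_1]$, and letting $\tau_1\uparrow\tau_0$ gives $U\equiv 0$ on $[0,T_0)$.

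The Hermite case follows from the ground-state transformation: since $(-\Delta+|x|^2)\phi_0=n\phi_0$ with $\phi_0(x)=e^{-|x|^2/2}$, a short computation gives that $W\in\Th$ iff $\widehat U(x,t):=e^{|x|^2/2+nt}W(x,t)\in\Tou$. The hypothesis on $W$ translates, for $t$ in a bounded interval, into $\widehat U(x,t)\le A'e^{|x|q(|x|)}$ with $q(r)=r/2+p(r)$, and a case split according as $p(r)\ge r/2$ or not shows $\int^\infty dr/q(r)=\infty$ while $r^{\ga+1}q(r)$ inherits the monotonicity. The OU case just proved then gives $\widehat U\equiv 0$, hence $W\equiv 0$. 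For the converses, I would transfer a classical Täcklind counterexample $V\in\T(\SR^n\times\SR)$ (with $V(y,0)\equiv 0$, $V\not\equiv 0$ and $|V(y,\tau)|\le Ce^{|y|p(|y|\vee 1)}$, which exists precisely when $\int^\infty dr/p(r)<\infty$) by setting $U(x,t):=V(e^{-2t}x,\tfrac{1}{4}(1-e^{-4t}))$; using $e^{-2t}\le 1$ for $t\ge 0$ and the monotonicity of $rp(r)$, the required bound on $U$ follows, and the Hermite counterexample is then $W=e^{-|x|^2/2-nt}U$, whose bound only improves.

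The principal obstacle in this plan is the preparatory unilateral Täcklind uniqueness for $\T$ under the weak regularity ``$r^\ga g(r)$ non-decreasing''. The two-sided Täcklind theorem is classical, but the one-sided version requires building an auxiliary supersolution tailored to $g$ and invoking the mean value maximum principle of Theorem \ref{main}; this is precisely where the monotonicity hypothesis on $r^\ga p(r)$ earns its keep, by providing the regularity of $g$ needed for that construction.
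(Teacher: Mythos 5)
Your overall architecture (transfer OU to the classical heat equation via $\varphi$, derive the Hermite case by the ground-state transformation, and transfer T\"acklind's counterexample back for the converse) matches the paper's, and those reductions are sound modulo small repairs (e.g.\ in the converse you should work with the non-decreasing minorant $\bar p$ rather than $p$ itself, since only $r^\ga p(r)$ is assumed monotone; and your ``case split'' for showing $\int^\infty dr/(p(r)+r/2)=\infty$ is not a complete argument when $p$ oscillates --- the paper needs the $\ell_j$-construction of Lemma \ref{Lem2} for this). But there is a genuine gap at the heart of the plan: your ``preparatory unilateral T\"acklind theorem for $\T$,'' as you describe its proof (auxiliary supersolution plus the maximum principle), can only yield $V\leq 0$, not $V\equiv 0$. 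A barrier/Phragm\'en--Lindel\"of comparison uses the growth hypothesis on the lateral boundary $|y|=R$ to dominate $V$ from above by a small supersolution; since the hypothesis \eqref{uniTa} is one-sided, there is nothing to compare $-V$ against, and no barrier argument controls how negative $V$ may be. So the scheme as written stops at $V\le 0$.

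The paper closes exactly this gap with a second, independent ingredient: having obtained $U\le 0$ from Hayne's one-sided maximum principle (Theorem \ref{hay}, which is the analogue of your barrier step, proved for $\Dt$ and $\Dt-|x|^2$ in \cite{Hayne} and transferred to the OU case), it invokes the Widder-type uniqueness theorem for one-signed solutions with vanishing initial data, in the Aronson--Besala generality \cite{AB67}, to upgrade $U\le 0$ and $U(x,0)\equiv 0$ to $U\equiv 0$. That step is a genuinely different theorem (resting on the representation/Harnack theory of nonnegative solutions, not on growth conditions), and it is indispensable here; your proposal needs to add it (either for the classical heat equation after your transference, or directly for OU/Hermite as in \cite{AB67}) before the direct implication is proved.
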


\section{Notation and main results}

\subsection{A transference principle}\label{S_trans}
Our aproach will be based on known transference formulas between the classes $\T$, $\Tou$ and $\Th$ which we describe next. Given $U \in C^{2,1} (E)$, we consider the  transformation
\Be
V(x,t) := e^{-nt} e^{-|x|^2 /2} U(x,t).\label{VU}\Ee
It is easily verified that
\Be
\Big(\partial_t-\Dt+|x|^2\Big)[V(x,t)]\,=\,e^{-nt}e^{\frac{-|x|^2}2}\,\Big(\partial_t-\Dt+2x\cdot\nabla\Big)[U(x,t)],
\label{edpV}\Ee
from which we conclude that \begin{align} \label{H-sii-OU}
U\in \Tou(E) \quad \Longleftrightarrow \quad V\in\Th(E).
\end{align}
This elementary transformation has recently been used in various contexts regarding Hermite operators; see e.g. \cite{Abu, AST,GHSTV}.

\

We  next give a transformation which relates the classes $\T$ and $\Tou$. 
It is suggested by the Mehler formula expression, and it is more or less implicit
in the early works in the topic \cite{hille, muck, sjogren}.

\

\indent Let $ \varphi : \SR^{n+1} \to \SR^n \times (-\infty, 1/4) $ be the $C^\infty$-diffeomorphism
given by
\begin{equation}
\label{phi}
\varphi(x,t) = \Big( \frac{x}{e^{2t}}, \frac{1-e^{-4t}}{4} \Big),
\end{equation}
whose inverse takes the form
\begin{equation*}
\varphi^{-1} (y,s) = \Big(\; \frac{y}{\sqrt{1 - 4s}}\;, \;\tfrac{1}{4} \log \tfrac{1}{1-4s} \;\Big).
\end{equation*}
Observe that $ \varphi $ is increasing and preserves the positivity in the $t$-variable. Next, we define the transformation
$T : C (\varphi(E)) \to C (E)$ given by
\begin{equation}
U(x,t):=T u (x,t) = \; u \big(\varphi(x,t)\big).
\label{TuU}
\end{equation}
Then we have the following elementary relation.
\begin{lemma}
If $U=Tu\in C^{2,1}(E)$, then
\Be
\Big(\partial_t-\Dt+2x\cdot\nabla_x\Big)[U(x,t)]\,=\,e^{-4t}\,T\Big[\big(\partial_s-\Dt_y\big)u\Big].
\label{edpU}\Ee
\end{lemma}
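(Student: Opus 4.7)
The proof is a direct chain-rule computation, since $T$ is defined by pullback along the diffeomorphism $\varphi$. I would organise it in three short steps, and there is no real obstacle, only bookkeeping.

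First, I would record the elementary derivatives of $\varphi(x,t)=(y,s)$ with $y_i=x_i e^{-2t}$ and $s=(1-e^{-4t})/4$:
\[
\frac{\partial y_i}{\partial t}=-2x_i e^{-2t}=-2y_i,\quad \frac{\partial y_i}{\partial x_j}=e^{-2t}\delta_{ij},\quad \frac{\partial s}{\partial t}=e^{-4t},\quad \frac{\partial s}{\partial x_j}=0.
\]

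Second, I would apply the chain rule to $U(x,t)=u(\varphi(x,t))$. Differentiating in $t$ gives
\[
\partial_t U \;=\; e^{-4t}\,(\partial_s u)\circ\varphi\;-\;2\sum_{i=1}^n y_i\,(\partial_{y_i}u)\circ\varphi,
\]
while differentiating twice in $x_j$ yields $\partial^2_{x_j x_j}U = e^{-4t}(\partial^2_{y_j y_j}u)\circ\varphi$, hence
\[
\Delta_x U \;=\; e^{-4t}\,(\Delta_y u)\circ\varphi.
\]
For the drift term, using $x_i = y_i e^{2t}$,
\[
2x\cdot\nabla_x U \;=\; 2\sum_{i=1}^n x_i\,e^{-2t}\,(\partial_{y_i}u)\circ\varphi \;=\; 2\sum_{i=1}^n y_i\,(\partial_{y_i}u)\circ\varphi.
\]

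Third, adding the three identities, the two sums $\pm 2\sum_i y_i(\partial_{y_i}u)\circ\varphi$ cancel, and we obtain
\[
\bigl(\partial_t-\Delta+2x\cdot\nabla_x\bigr)U(x,t)\;=\;e^{-4t}\bigl[(\partial_s u-\Delta_y u)\bigr]\circ\varphi(x,t),
\]
which is exactly $e^{-4t}\,T\bigl[(\partial_s-\Delta_y)u\bigr](x,t)$. The only subtlety, if any, is keeping track of the factor $e^{-4t}$: it arises from the spatial rescaling (contributing $e^{-4t}$ to $\Delta_x$) and simultaneously as the Jacobian factor $\partial_t s=e^{-4t}$ in the time derivative, so the same prefactor correctly multiplies both $\partial_s u$ and $\Delta_y u$ on the right-hand side. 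The identity \eqref{edpU} follows.
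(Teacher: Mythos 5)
Your computation is correct and is essentially identical to the paper's own proof: both record the partial derivatives of $\varphi$, apply the chain rule to get $\partial_t U$, $\partial_{x_i}U$, $\partial^2_{x_ix_i}U$, and observe the cancellation of the drift terms. Nothing is missing.
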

\begin{proof}
From $U(x,t)=u\big(xe^{-2t},(1-e^{-4t})/4\big)$ one easily obtains
\Beas
\partial_tU(x,t)& =& e^{-4t}u_s\big(\phi(x,t)\big)-2e^{-2t}\,x\cdot(\nabla_yu)\big(\phi(x,t)\big)\\
\partial_{x_i}U(x,t)& =& e^{-2t}\,u_{y_i}\big(\phi(x,t)\big)\\
\partial^2_{x_ix_i}U(x,t)& = & e^{-4t}u_{y_iy_i}\big(\phi(x,t)\big).
\Eeas
From here the expression \eqref{edpU} follows immediately.
\end{proof}
As a consequence we conclude that,
\begin{align} \label{Li}
u\in \T(\varphi(E)) \quad \Longleftrightarrow \quad U=Tu\in\Tou(E).
\end{align}


\subsection{Classical mean value formulas}

We recall the mean value formula for classical temperatures in $\T$.
More generally, we state the result for \emph{subtemperatures}, that is 
for functions $u\in C^{2,1}(E)$ such that
\[
u_t\leq \Dt u, \quad \mbox{in $E$},
\]
which we shall denote by $u\in \sT(E)$. 
 For each $(x, t) \in \reales^n\times\SR$ and $r > 0$, we define the \emph{heat ball} $\O(x,t;r)$, with ``center'' $(x,t)$ and ``radius'' $r$, by
\begin{equation*}
\Omega(x,t;r) = \Big\{ (y,s) \in \reales^n \times\R \;\colon\; 
 \Phi(x-y,t-s) > \tfrac1{(4\pi r)^{\frac n2}} \Big\}, 
\end{equation*}
where
\begin{equation*}
\Phi (x,t) =  \left\{ 
        \begin{tabular}{ll}
        	$ \frac{e^{-|x|^2 / 4t}}{(4 \pi t)^{n/2}} $ & if $x \in \reales^n$, $t > 0$\\
            $0$ & if $x \in \reales^n$, $t \leq 0$ \\
        \end{tabular}
\right.
\end{equation*}
is the usual fundamental solution of the heat equation. 
Observe that $(y,s)\in\O(x,t;r)$ iff 
\[
|y-x|^2<2n(t-s)\ln\tfrac r{t-s},\quad s\in(t-r,t),
\]
so $\O(x,t;r)$ is an open convex set, axially symmetric about
the line $\{x\} \times \reales$, and with $(x,t)$ lying at the top boundary and $(x, t-r)$ at the bottom boundary;
see e.g. \cite[p. 52]{evans} or \cite[p. 2]{watsonbook}. Heat balls are also translation invariant, in the sense that\[
\O(x,t;r)\,=\,(x,t)+\O(0,0;r).
\] 
The next theorem, due to N. Watson \cite{watson73},  is known as mean value property for the heat equation; see also  
 \cite[Theorem 1.16]{watsonbook}.

\begin{theorem} \label{MV-Heat}
Let $u \in C^{2,1} (E)$.  If $u \in \sT(E)$ then 
\begin{equation}
u(x,t) \leq \frac{1}{(4\pi r)^\frac n2} \iint_{\Omega (x,t;r)} u(y,s)\, \frac{|x - y|^2}{4(t - s)^2}\, dy ds
\label{uleq}
\end{equation}
provided $\overline{\Omega} (x,t;r) \subset E$.
Conversely if for every $(x, t) \in E$ and every $\epsilon > 0$ there exists some $r < \epsilon$ such
that \eqref{uleq} holds, then $u \in \sT(E)$.
\end{theorem}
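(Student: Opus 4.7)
\emph{Plan.} I follow the classical monotonicity approach. By translation invariance, $\Omega(x,t;r)=(x,t)+\Omega(0,0;r)$, so I may assume $(x,t)=(0,0)$. Define
\[
\phi(r)\,:=\,\frac{1}{(4\pi r)^{n/2}}\iint_{\Omega(0,0;r)} u(y,s)\,\frac{|y|^2}{4s^2}\,dy\,ds,\qquad 0<r\leq r_0,
\]
where $r_0>0$ satisfies $\overline{\Omega(0,0;r_0)}\subset E$. The inequality \eqref{uleq} will follow from showing that $\phi$ is non-decreasing on $(0,r_0]$ and that $\lim_{r\to 0^+}\phi(r)=u(0,0)$.

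The limit at $0^+$ is verified by parabolic rescaling: the map $(y,s)\mapsto(\sqrt{r}\,y,\,rs)$ is a diffeomorphism of $\Omega(0,0;1)$ onto $\Omega(0,0;r)$ (immediate from the defining inequality $|y|^2<2n(-s)\log(r/(-s))$), and after this change of variables
\[
\phi(r)\,=\,\frac{1}{(4\pi)^{n/2}}\iint_{\Omega(0,0;1)} u(\sqrt{r}\,y,rs)\,\frac{|y|^2}{4s^2}\,dy\,ds.
\]
Continuity of $u$ and dominated convergence then yield $\lim_{r\to 0^+}\phi(r)=u(0,0)$, once one checks the normalization $(4\pi)^{-n/2}\iint_{\Omega(0,0;1)}|y|^2/(4s^2)\,dy\,ds=1$ by an explicit computation in polar coordinates in $y$ and the substitution $\tau=-s$, reducing to a Gamma integral.

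For the monotonicity I differentiate the scaled expression in $r$ and integrate by parts in $(y,s)$ against the boundary-vanishing potential
\[
\Psi(y,s;r)\,:=\,\log\Phi(-y,-s)+\tfrac{n}{2}\log(4\pi r),
\]
which by the very definition of the heat ball satisfies $\Psi\geq 0$ in $\Omega(0,0;r)$ and $\Psi=0$ on $\partial\Omega(0,0;r)$. The computation hinges on the identity $|y|^2/(4s^2)=|\nabla_y\log\Phi(-y,-s)|^2$ and on the adjoint heat equation $\partial_s\Phi(-y,-s)+\Dt_y\Phi(-y,-s)=0$, and produces an identity of the form
\[
\phi'(r)\,=\,\frac{C_n}{r^{n/2+1}}\iint_{\Omega(0,0;r)}(\Dt u-u_s)\,\Psi(y,s;r)\,dy\,ds,
\]
with $C_n>0$. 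The boundary contributions drop because $\Psi\equiv 0$ on $\partial\Omega(0,0;r)$, and the bulk integrand has the correct sign since $u\in\sT$, so $\phi'(r)\geq 0$. Combined with $\lim_{r\to 0^+}\phi(r)=u(0,0)$, this gives \eqref{uleq}.

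The converse follows by reversing the argument: if \eqref{uleq} holds for arbitrarily small $r>0$ at every $(x,t)\in E$ but $u_s-\Dt u>0$ at some $(x_0,t_0)\in E$, then by continuity this strict inequality persists in a parabolic neighborhood, and the $\phi'(r)$-identity above yields $\phi(r)<u(x_0,t_0)$ for sufficiently small $r>0$, contradicting \eqref{uleq} at $(x_0,t_0)$. The main obstacle is the rigorous justification of the $\phi'(r)$-identity: the top vertex $(0,0)$ of the heat ball is a cusp where $\Psi$ blows up, so the integration by parts must first be carried out on the truncated region $\Omega(0,0;r)\cap\{s<-\epsilon\}$, with the $\epsilon\to 0^+$ limit then controlled by the small slice volume of $\Omega(0,0;r)$ near the vertex.
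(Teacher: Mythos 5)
The paper does not actually prove Theorem \ref{MV-Heat}: it imports it from Watson \cite{watson73} (see also \cite[Theorem 1.16]{watsonbook} and \cite{evans}), so there is no in-paper argument to compare yours against. What you have written is the standard Evans--Watson monotonicity proof, correctly transposed to the normalization used here: since the radius parameter $r$ of $\Omega(x,t;r)$ plays the role of $r^{2}$ in Evans' convention, the correct rescaling is indeed $(y,s)\mapsto(\sqrt{r}\,y,\,rs)$, with Jacobian $r^{n/2+1}$, which is why your averaged function $\phi(r)$ becomes an integral over the fixed ball $\Omega(0,0;1)$ and why the prefactor in your derivative identity is $r^{-(n/2+1)}$ rather than Evans' $r^{-(n+1)}$. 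The choice of $\Psi=\log\Phi(-y,-s)+\tfrac n2\log(4\pi r)$, its positivity inside $\Omega(0,0;r)$ and vanishing on the lateral boundary, the identity $|y|^{2}/(4s^{2})=|\nabla_y\Psi|^{2}$, and the sign analysis giving $\phi'\geq 0$ for $u\in\sT$ are all correct; so is the converse, since on a neighborhood where $u_s-\Delta u>\delta>0$ the same identity forces $\phi$ to be strictly decreasing, hence $\phi(r)<u(x_0,t_0)$ for \emph{all} small $r$, contradicting the hypothesis that \eqref{uleq} holds for even one arbitrarily small $r$. Two items are asserted rather than carried out --- the normalization $(4\pi)^{-n/2}\iint_{\Omega(0,0;1)}|y|^{2}/(4s^{2})\,dy\,ds=1$ and the actual integration by parts producing the $\phi'$ identity --- but both are routine, and you correctly flag the one genuinely delicate point (the blow-up of $\Psi$ and of the weight at the top vertex) together with a workable truncation at $\{s=-\epsilon\}$, where the boundary terms vanish because the slice has measure $O\big((\epsilon\log(r/\epsilon))^{n/2}\big)$ while $\Psi$ grows only logarithmically. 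I see no gap; if you write it up in full, it would serve as a self-contained proof of the cited result.
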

In particular, the following characterization holds.

\begin{corollary}
Let $u \in C^{2,1} (E)$. Then $u\in\T(E)$ if and only if
\Be\label{clasMV}
u(x,t) = \frac{1}{(4\pi r)^\frac n2} \iint_{\Omega (x,t;r)} u(y,s)\, \frac{|x - y|^2}{4(t - s)^2}\, dy ds
\Ee
for all heat balls $\overline{\Omega} (x,t;r) \subset E$.
\end{corollary}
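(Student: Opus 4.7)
The plan is to derive both implications of the Corollary directly from Theorem \ref{MV-Heat}, using the elementary observation that $u\in\T(E)$ if and only if both $u$ and $-u$ belong to $\sT(E)$. Indeed, the equation $\partial_t u=\Dt u$ is equivalent to the pair of inequalities $\partial_t u\leq\Dt u$ and $\partial_t(-u)\leq\Dt(-u)$, so temperatures are precisely those $C^{2,1}$ functions which are simultaneously subtemperatures together with their negatives.

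For the forward implication, assume $u\in\T(E)$ and fix any heat ball with $\overline{\O}(x,t;r)\subset E$. Applying the first part of Theorem \ref{MV-Heat} to $u\in\sT(E)$ gives the ``$\leq$'' direction in \eqref{clasMV}, while applying it to $-u\in\sT(E)$ and multiplying through by $-1$ gives the reverse ``$\geq$'' direction. Combining both yields the equality \eqref{clasMV}.

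For the converse, suppose \eqref{clasMV} holds for every heat ball $\overline{\O}(x,t;r)\subset E$. Since $E$ is open, for each $(x,t)\in E$ and each $\epsilon>0$ there are admissible radii $r<\epsilon$, so the mean value identity certainly holds for such small $r$. Reading the identity as the equality case of \eqref{uleq} verifies the hypothesis of the converse part of Theorem \ref{MV-Heat} for $u$, and hence $u\in\sT(E)$. Negating the identity shows it also holds with $u$ replaced by $-u$, so by the same argument $-u\in\sT(E)$. Therefore $\partial_t u\leq\Dt u$ and $\partial_t u\geq\Dt u$ hold throughout $E$, i.e.\ $u\in\T(E)$.

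No step poses a genuine obstacle: both directions are essentially tautological consequences of Theorem \ref{MV-Heat} once the sign symmetry is exploited. The only minor point to notice is that the hypothesis of the converse part of Theorem \ref{MV-Heat} requires validity of the mean value property for arbitrarily small radii around each point, which is automatically guaranteed here by the openness of $E$ together with the translation invariance $\O(x,t;r)=(x,t)+\O(0,0;r)$.
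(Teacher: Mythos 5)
Your proof is correct and follows exactly the route the paper intends: the corollary is stated as an immediate consequence of Theorem \ref{MV-Heat}, obtained by applying the subtemperature result to both $u$ and $-u$, which is precisely your argument in both directions. The remark about admissible small radii shrinking to the point $(x,t)$ is the right (and only) detail to check for the converse.
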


\subsection{Mean value formulas in $\sTou$ and $\sTh$} \label{MeanValue}
We first 
define the corresponding notion of ``heat ball'', by using the transformation $\phi$ in \eqref{phi}. Namely, given $(x_0,t_0)\in\R^{n+1}$ 
and $r>0$ we let
\Be
\Xi(x_0,t_0;r):=\,\phi^{-1}\Big(\O\big(\phi(x_0,t_0);r\big)\Big).
\label{Xiball}
\Ee
Equivalently, $(x,t)\in \Xi(x_0,t_0;r)$ iff
\[
\Big|\frac x{e^{2t}}- \frac {x_0}{e^{2t_0}}\Big|^2<\tfrac n2\,(e^{-4t}-e^{-4t_0})\,\ln \tfrac{4r}{e^{-4t}-e^{-4t_0}},
\]
so this set is now axially symmetric with respect to the curve 
$\Ga(x_0,t_0)=\{(x_0\exp\big(2(t-t_0)\big), t)\mid t\in\R\}$, with the point $(x_0,t_0)$ lying at the top boundary.
 Also, unlike the classical heat balls, these $\Xi$-balls are no longer translation invariant. See Figure \ref{fig1} below for drawings in various situations.

\

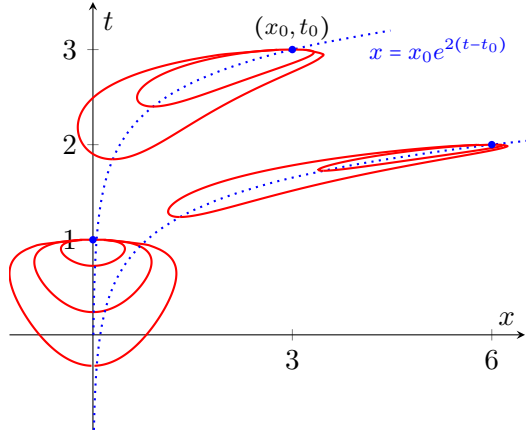
\begin{figure}[h]
 \centering

\begin{tikzpicture} 
\begin{axis}[axis lines=middle, xmax=6.5,ymax=3.5,ymin=-1, 
             xlabel={$x$}, ylabel={$t$}, xtick={0,3,6}, ytick={0,1,2,3}]


\addplot[red,  thick,  domain=0.01:200, smooth, samples=1000 ] 
		   ( { (x+1)^(-1/2) *(0+((ln(200/x))*x*0.5)^(0.5)) }, { 1-0.25*ln(x+1) } )   ;
\addplot[red,  thick,  domain=0.01:200, smooth, samples=1000] 
       ( { (x+1)^(-1/2) *(0-((ln(200/x))*x*0.5)^(0.5)) }, { 1-0.25*ln(x+1) } )   ;
			
\addplot[red,  thick, domain=0.01:20, smooth, samples=200]  
       ( { (x+1)^(-1/2) *(0+((ln(20/x))*x*0.5)^(0.5)) }, { 1-0.25*ln(x+1) } )   ;
\addplot[red,  thick,  domain=0.01:20, smooth, samples=200]  
       ( { (x+1)^(-1/2) *(0-((ln(20/x))*x*0.5)^(0.5)) }, { 1-0.25*ln(x+1) } )   ;
			
\addplot[red,  thick, domain=0.01:2, smooth, samples=200]  
       ( { (x+1)^(-1/2) *(0+((ln(2/x))*x*0.5)^(0.5)) }, { 1-0.25*ln(x+1) } ) ;
\addplot[red,  thick, domain=0.01:2, smooth, samples=200]  
       ( { (x+1)^(-1/2) *(0-((ln(2/x))*x*0.5)^(0.5)) }, { 1-0.25*ln(x+1) } ) ;
\addplot[red,  thick, domain=-0.2:0.2] (x,1);
\node[circle,fill=blue, inner sep=1pt] at (axis cs:0,1) {};


\addplot[thick, dotted,   domain=0:3.2,smooth,variable=\t,blue] 
			                  ({3*exp(2*(\t-3))}, {\t});

\addplot[thick, domain=0.01:100,smooth,variable=\s,red, samples=500]  
	                     ( { (\s+1)^(-1/2) *(3+((ln(100/\s))*\s*0.5)^(0.5)) }, { 3-0.25*ln(\s+1) } )   ;
\addplot[thick, domain=0.01:100,smooth,variable=\s,red, samples=500]  
                      ( { (\s+1)^(-1/2) *(3-((ln(100/\s))*\s*0.5)^(0.5)) }, { 3-0.25*ln(\s+1) } )   ;
											
\addplot[thick, domain=0.01:10,smooth,variable=\s,red, samples=400]  
	                     ( { (\s+1)^(-1/2) *(3+((ln(10/\s))*\s*0.5)^(0.5)) }, { 3-0.25*ln(\s+1) } )   ;
\addplot[thick, domain=0.01:10,smooth,variable=\s,red, samples=400]  
                      ( { (\s+1)^(-1/2) *(3-((ln(10/\s))*\s*0.5)^(0.5)) }, { 3-0.25*ln(\s+1) } )   ;
\addplot[thick, red, no marks] coordinates {  (2.8,2.998) (3,3) (3.18,2.998)}  ;	
\node[circle,fill=blue, inner sep=1pt] at (axis cs:3,3) {};

\addplot[thick, dotted,   domain=-1:3.2,smooth,variable=t,blue] 
			                  ({6*exp(2*(t-2)}, {t});			
\addplot[thick, domain=0.01:2, smooth,variable=\s,red, samples=500]  
	                     ( { (\s+1)^(-1/2) *(6+((ln(2/\s))*\s*0.5)^(0.5)) }, { 2-0.25*ln(\s+1) } )   ;
\addplot[thick, domain=0.01:2, smooth,variable=\s,red, samples=500]  
                      ( { (\s+1)^(-1/2) *(6-((ln(2/\s))*\s*0.5)^(0.5)) }, { 2-0.25*ln(\s+1) } )   ;
											
\addplot[thick, domain=0.01:20, smooth,variable=\s,red, samples=400]  
	                     ( { (\s+1)^(-1/2) *(6+((ln(20/\s))*\s*0.5)^(0.5)) }, { 2-0.25*ln(\s+1) } )   ;
\addplot[thick, domain=0.01:20, smooth,variable=\s,red, samples=400]  
                      ( { (\s+1)^(-1/2) *(6-((ln(20/\s))*\s*0.5)^(0.5)) }, { 2-0.25*ln(\s+1) } )   ;
																						
\addplot[thick, red, no marks] coordinates {  (5.8,1.995) (6,2) (6.15,1.998)}  ;	
\node[circle,fill=blue, inner sep=1pt] at (axis cs:6,2) {};

\node[above] at (axis cs:3,3) {{\footnotesize $(x_0,t_0)$}};
\node[right,blue] at (axis cs:4,3) {{\footnotesize $x=x_0 e^{2(t-t_0)}$}};
			
\end{axis}
\end{tikzpicture}

\caption{Hermite heat balls $\Xi(x_0,t_0;r)$ of various centers and radii.}\label{fig1}
 \end{figure}

\

We now state a slightly more general version than 
Theorem \ref{main}, which is also valid for subtemperatures. As before, we say that $U\in C^{2,1}(E)$ belongs to $\sTou(E)$ if
\Be
\partial_t U\leq \big(\Delta -2x\cdot\nabla\big) U\quad\mbox{in $E$},\label{OUleq}\Ee
and that $U\in \sTh(E)$ if 
\Be
\partial_t U\leq \big(\Delta -|x|^2\big) U\quad\mbox{in $E$}.\label{Hleq}\Ee
With the notation in \eqref{VU} and \eqref{TuU}, it follows from \eqref{edpV} and \eqref{edpU} that
\Be \label{Lis}
 u\in \sT(\varphi(E))  \;\; \Longleftrightarrow \;\; U\in\sTou(E)
 \;\;\Longleftrightarrow  \;\; V\in\sTh(E) .
\Ee

\

\begin{theorem} \label{MV-OU}
Let $U \in C^{2,1} (E)$. If $U \in \sTou(E)$ and $(x,t)\in E$ then
\begin{equation}
U(x,t) \leq \frac{1}{(4\pi r)^{\frac n2}} \iint_{\Xi(x,t;r)} U(y,s)\, K^{\rm OU}_{x,t} (y,s) dy ds\label{Uxt}
\end{equation}
provided $\;\overline{\Xi}(x,t;r) \subset E$, where
\Be
K^{\rm OU}_{x,t} (y,s) =\,8 e^{-2(n+2)s}
                \;
						\frac{|x{e^{-2t}} - y{e^{-2s}}|^2}
								{|e^{-4t} - e^{-4s}|^2} .
\label{KOU}\Ee
Conversely if for every $(x, t) \in E$ and every $\epsilon > 0$ there exists some $r < \epsilon$ such
that \eqref{Uxt} holds, then $U \in \sTou(E)$.
\end{theorem}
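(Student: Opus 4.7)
The strategy is to reduce the statement to Watson's classical mean value formula (Theorem~\ref{MV-Heat}) via the transference equivalence \eqref{Lis} established in Section~\ref{S_trans}, which is precisely what the preceding setup is designed for.

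First, I would set $u(y,s):=U(\varphi^{-1}(y,s))$ on $\varphi(E)$, so that $U=Tu$ in the sense of \eqref{TuU}. Since $\varphi$ is a $C^\infty$-diffeomorphism, $u\in C^{2,1}(\varphi(E))$, and by \eqref{Lis} the hypothesis $U\in\sTou(E)$ is equivalent to $u\in\sT(\varphi(E))$. Moreover, the very definition \eqref{Xiball} gives $\varphi(\Xi(x,t;r))=\Omega(\varphi(x,t);r)$, so the inclusion $\overline{\Xi}(x,t;r)\subset E$ is equivalent to $\overline{\Omega}(\varphi(x,t);r)\subset\varphi(E)$, and Watson's inequality \eqref{uleq} applies to $u$ at $\varphi(x,t)=(xe^{-2t},(1-e^{-4t})/4)$.

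The remaining step is a change of variables $(y',s')=\varphi(y,s)$ in Watson's integral. The Jacobian is easily computed: $D\varphi(y,s)$ is block upper-triangular with diagonal entries $(e^{-2s},\dots,e^{-2s},e^{-4s})$, hence $|\det D\varphi(y,s)|=e^{-2(n+2)s}$. Using $u(y',s')=U(y,s)$, $xe^{-2t}-y'=xe^{-2t}-ye^{-2s}$, and $(1-e^{-4t})/4-s'=(e^{-4s}-e^{-4t})/4$, the Watson kernel $|x'-y'|^2/\big(4(t'-s')^2\big)$ pulls back, once combined with the Jacobian $e^{-2(n+2)s}$, to the kernel $K^{\rm OU}_{x,t}(y,s)$ in \eqref{KOU}, yielding \eqref{Uxt}. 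For the converse, the same change of variables, run in reverse, shows that any admissible $r<\epsilon$ witnessing \eqref{Uxt} at $(x,t)$ also witnesses \eqref{uleq} for $u$ at $\varphi(x,t)$ with the same radius; the converse part of Theorem~\ref{MV-Heat} then gives $u\in\sT(\varphi(E))$, whence $U\in\sTou(E)$ by \eqref{Lis}.

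The only real obstacle is the bookkeeping in the change of variables: one must combine the factor $4(t'-s')^2=(e^{-4s}-e^{-4t})^2/4$ coming from Watson's denominator, the Jacobian $e^{-2(n+2)s}$, and the squared norm $|xe^{-2t}-ye^{-2s}|^2$ so that the numerical constants match \eqref{KOU}. This is straightforward but requires careful tracking of powers; everything else is a direct invocation of results already developed earlier in the excerpt.
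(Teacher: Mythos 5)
Your proposal is correct and follows exactly the paper's own argument: transfer $U$ to $u=T^{-1}U\in\sT(\varphi(E))$ via \eqref{Lis}, apply Watson's Theorem \ref{MV-Heat} to $u$ at $\varphi(x,t)$, and change variables $(\tilde y,\tilde s)=\varphi(y,s)$ with Jacobian $e^{-2(n+2)s}$, the converse being the same computation run backwards. One caveat on the bookkeeping you deferred: since $\tilde t-\tilde s=(e^{-4s}-e^{-4t})/4$, pulling back Watson's kernel $|\tilde x-\tilde y|^2/\bigl(4(\tilde t-\tilde s)^2\bigr)$ and multiplying by the Jacobian yields the constant $4$ rather than the $8$ appearing in \eqref{KOU} --- the paper's own proof silently replaces the $4^{-1}$ of \eqref{uleq} by $2^{-1}$ at this step, so the numerical factor in \eqref{KOU} (harmless for all subsequent applications) deserves a second look.
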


\begin{remark}
When $U\in\Tou(E)$, then  the above theorem, applied to the functions $U$ and $-U$, easily implies Theorem \ref{main}.
\end{remark}

The version of Hermite subtemperatures takes the following form.

\begin{theorem} \label{MV-Hermite}
Let $V \in C^{2,1} (E)$. If $V \in \sTh(E)$ and $(x,t)\in E$ then
\begin{equation}
V(x,t) \leq \frac{1}{(4\pi r)^{\frac n2}} \iint_{\Xi(x,t;r)} V(y,s)\, K^{\rm H}_{x,t} (y,s) dy ds\label{Vxt}
\end{equation}
provided $\;\overline{\Xi}(x,t;r) \subset E$, where
\Be
K^{\rm H}_{x,t} (y,s) =\,e^{(s-t)n}\,e^{\frac{|y|^2-|x|^2}2}\, K^{\rm OU}_{x,t} (y,s).
\label{KH}\Ee
Conversely if for every $(x, t) \in E$ and every $\epsilon > 0$ there exists some $r < \epsilon$ such
that \eqref{Vxt} holds, then $V \in \sTh(E)$.
\end{theorem}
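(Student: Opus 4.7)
The plan is to reduce Theorem~\ref{MV-Hermite} directly to Theorem~\ref{MV-OU} by means of the transference identity \eqref{H-sii-OU}, which through \eqref{Lis} holds also at the subtemperature level. Concretely, given $V\in\sTh(E)$, the function
\[
U(x,t):=e^{nt}e^{|x|^2/2}V(x,t)
\]
is in $C^{2,1}(E)$ and belongs to $\sTou(E)$. The whole proof then reduces to pushing the OU mean value inequality through this multiplication.

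First I would fix $(x,t)\in E$ and $r>0$ with $\overline{\Xi}(x,t;r)\subset E$, and apply Theorem~\ref{MV-OU} to $U$, obtaining
\[
e^{nt}e^{|x|^2/2}V(x,t)\,\leq\, \frac{1}{(4\pi r)^{n/2}}\iint_{\Xi(x,t;r)} e^{ns}e^{|y|^2/2}\,V(y,s)\,K^{\rm OU}_{x,t}(y,s)\,dy\,ds.
\]
Dividing both sides by the positive quantity $e^{nt}e^{|x|^2/2}$ and absorbing the remaining exponential factors into the kernel produces the desired inequality \eqref{Vxt} with
\[
K^{\rm H}_{x,t}(y,s)\,=\,e^{(s-t)n}\,e^{(|y|^2-|x|^2)/2}\,K^{\rm OU}_{x,t}(y,s),
\]
which is exactly \eqref{KH}. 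Note the sign of the exponent is consistent: $V(x,t)$ sits at the top and we subtract the value at the point $(x,t)$ from the values at $(y,s)$ inside the integral.

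For the converse, I would argue by the same algebraic manipulation in reverse. If for every $(x,t)\in E$ and every $\epsilon>0$ there is some $r<\epsilon$ for which \eqref{Vxt} holds, then multiplying through by $e^{nt}e^{|x|^2/2}$ rewrites the inequality as the corresponding OU mean value inequality \eqref{Uxt} for $U=e^{nt}e^{|x|^2/2}V$. The converse direction in Theorem~\ref{MV-OU} then yields $U\in\sTou(E)$, whence \eqref{Lis} delivers $V\in\sTh(E)$.

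I do not anticipate a real obstacle: the whole argument is a bookkeeping of the exponential weights, since the hard analytic content (the $\Xi$-ball mean value property) has already been carried out in Theorem~\ref{MV-OU}. The only point deserving a brief sanity check is that the multiplier $e^{nt}e^{|x|^2/2}$ is smooth and strictly positive on $E$, so it interacts trivially with both the class $C^{2,1}(E)$ and with the inequality sign in the definitions \eqref{OUleq} and \eqref{Hleq}; that is precisely what \eqref{Lis} encodes.
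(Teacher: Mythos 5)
Your proposal is correct and follows exactly the paper's own argument: transfer $V$ to $U=e^{nt}e^{|x|^2/2}V\in\sTou(E)$ via \eqref{Lis}, apply Theorem \ref{MV-OU}, and divide out the positive weight to obtain \eqref{Vxt} with the kernel \eqref{KH}, reversing the computation for the converse. The paper states this in one line ("elementary operations lead to \eqref{Vxt}"); you have simply written out those operations.
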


\subsection{Proof of Theorems \ref{MV-OU} and \ref{MV-Hermite}}\label{ProofMV}
 \begin{proof}[Proof of Theorem \ref{MV-OU}] Assume that $U\in\sTou(E)$. By \eqref{Lis} we know that $u = T^{-1} U\in \sT(\varphi(E))$, so we can apply Theorem \ref{MV-Heat} at the point $(\tilde{x}, \tilde{t})=\phi(x,t)$ to obtain
\begin{equation}
u(\tilde{x}, \tilde{t}) \leq \frac{1}{(4\pi r)^\frac n2} \iint_{\Omega (\tilde{x},\tilde{t};r)}
            u(\tilde{y}, \tilde{s}) K_{\tx,\tit}(\ty,\ts)\, d\tilde{y} d\tilde{s},
\label{utxtt}
\end{equation}
with $ K_{\tx,\tit}(\ty,\ts)=2^{-1}{|\tilde{x} - \tilde{y}|^2}/{(\tilde{t} - \tilde{s})^2}$ and 
provided $\overline{\Omega} (\tilde{x},\tilde{t};r) \subset \varphi(E)$. Now, 
making the change of variables
\[
(\tilde{y},\tilde{s})=\Big(\frac y{e^{2s}},\frac{1-e^{-4s}}4\Big)=\phi(y,s),\]
whose jacobian is given by\[
\left|\frac{\partial(\tilde{y},\tilde{s})}{\partial(y,s)}\right|=\,e^{-2(n+2)s},
\]
and using the identity $U = u\circ\phi$,
one easily obtains the formula in \eqref{Uxt} with
\Be
K_{x,t}^{\rm OU}(y,s)=e^{-2(n+2)s}\,K_{\phi(x,t)}\big(\phi(y,s)\big),
\label{KKphi}
\Ee 
which agrees with \eqref{KOU}.
The converse follows by a completely similar argument.
\end{proof}
\begin{proof}[Proof of Theorem \ref{MV-Hermite}] Again, \eqref{Lis} implies that $U(x,t)=e^{ t n}e^{\frac{|x|^2}2} V(x,t)$ belongs to $\sTou(E)$, and therefore we can use the formula \eqref{Uxt}.
From here, elementary operations lead  to \eqref{Vxt}. 
\end{proof}


\section{Consequences: Maximum principles}\label{MaxPr}

\indent As it is standard in potential theory, we shall employ the mean value formulas in \eqref{Uxt} and \eqref{Vxt} to easily derive  maximum principles (in strong form) for functions in $\sTou$ and $\sTh$. These results are known in the literature for more general parabolic pdes, but typically with different proofs; see e.g. \cite[ch 3.3]{protter} or
\cite[ch 2.2]{friedman}. Here we follow the approach in \cite[ch 2.3]{evans}.

\

We shall use the general set terminology in \cite{friedman, protter}.
Given an open set $E$ and a point $(x_0, t_0) \in E$, we denote by $E_{t_0} = \{(x,t) \in E : t < t_0\}$. Also, we denote by
$\Lambda (x_0, t_0,E)$ the set of points $(x,t)$ that are lower than $(x_0, t_0)$ relative to $E$,
in the sense that there is a polygonal path $\gamma \subset E$ joining $(x_0, t_0)$ to $(x,t)$, along which the temporal
variable is \emph{strictly decreasing}. By a polygonal path we mean a path which is a union of finitely many line segments. 
So, if $E=\O\times(0,\infty)$, for some connected open set $\O\subset\R^n$, then $\Lambda(x_0,t_0,E)=E_{t_0}=\O\times(0,t_0)$, for all $x_0\in\O$ and $t_0>0$. For drawings in other situations, see Figure \ref{fig2} below or  \cite[p. 169]{protter}.

\begin{figure}[h]
 \centering
{\begin{tikzpicture}[scale=2]
\draw[->] (0.0,-1.1) -- (0.0,1.1) node[left] {$t$};
\draw[dotted] (3,0.5)-- (-0.1,0.5) node[left] {{\footnotesize $t_0$}};

\draw (0.2,0.9)--(2.7,0.9)--(2.35,0.3)--(2,0.6)--(1,-1)--(0.2,0.9); 
\draw[fill, opacity=0.3] (0.37,0.5)--(1,-1)--(1.685,0.096)--(1.5,0.096)--(1.33,0.5); 

\draw (1.25,0.7)--(1.75,0.7)--(1.5,0.1)--(1.25,0.7);
\draw[fill=white] (0.75,0.3)--(1.25,0.3)--(1,-0.3)--(0.75,0.3);


\fill (1,0.5) circle (1pt);
\node[above] at (1,0.5) {{\tiny $(x_0,t_0)$}};
\node[right] at (0,-0.7) {{\tiny $\La(x_0,t_0,E)$}};
\node[above] at (2,0.9) {{\footnotesize $E$}};

\end{tikzpicture}
}
\caption{The subset $\La(x_0,t_0,E)$ (in grey) of a set $E$.}\label{fig2}
\end{figure}
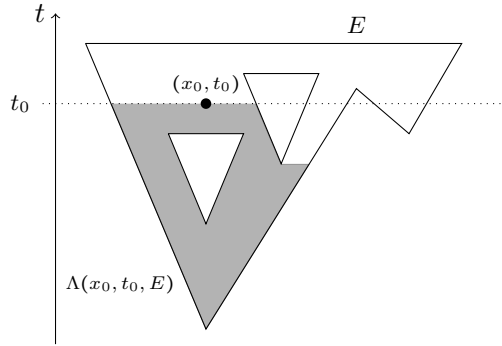

\begin{remark} \label{remark1}
Observe that $\varphi \big( \Lambda (x_0, t_0,E) \big) = \Lambda \big( \varphi (x_0, t_0), \varphi (E)\big)$, since the mapping $\phi$ is monotone in the $t$-variable. In particular, this implies that
$\Xi(x_0,t_0;r) \subset \Lambda(x_0, t_0, E)$,  for all sufficiently small $r > 0 $, since the same property holds for the classical heat balls. 
\end{remark}
\begin{remark} \label{varphi}
Both $\Omega(x_0,t_0;r)$ and $\Xi(x_0,t_0;r)$, 
have the same tangent plane at the point $(x_0,t_0)$, which is normal to the direction $(0,1)$. In particular, every poligonal path $\ga(s)$, with $\ga(0)=(x_0,t_0)$ and with strictly
decreasing temporal variable, has a portion $\ga\big((0,\e]\big)\subset\Xi(x_0,t_0;r)$, for some $\e>0$.
\end{remark}

We can now derive a \emph{strong maximum principle} for Hermite and Ornstein-Uhlenbeck 
subtemperatures. 

\begin{corollary} \label{StrongMaximum} {\bf (Strong maximum principle)}
Assume that $U \in \sTou (E)$ and there exists $(x_0,t_0) \in E$ such that
\begin{equation*}
\sup_{\La(x_0,t_0,E)} \, U\,=\, U(x_0,t_0).
\end{equation*}
Then $U$ is constant in the set $\Lambda(x_0, t_0, E)$. The same holds when $U\in \sTh(E)$
provided that $U(x_0,t_0)\geq 0$.
\end{corollary}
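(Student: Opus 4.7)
The plan is to follow Evans's approach: in the OU case, combine the mean value formula of Theorem \ref{MV-OU} with a polygonal-path continuation argument; in the Hermite case, reduce to OU via the transference \eqref{Lis} after a sign-preserving shift.

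For the OU statement, set $M := U(x_0,t_0)$ and fix any $(y_0,s_0)\in\La(x_0,t_0,E)$. Pick a polygonal path $\ga:[0,1]\to E$ from $(x_0,t_0)$ to $(y_0,s_0)$ with strictly decreasing temporal coordinate, and put
\[
\tau^* := \sup\bigl\{\tau\in[0,1] \;:\; U(\ga(\sigma))=M \text{ for all } \sigma\in[0,\tau]\bigr\};
\]
continuity of $U$ gives $U(\ga(\tau^*))=M$. I would then argue by contradiction that $\tau^*=1$. If $\tau^*<1$ and $(x^*,t^*):=\ga(\tau^*)$, concatenation of polygonal paths yields $\La(x^*,t^*,E)\subset\La(x_0,t_0,E)$, so $M=\sup_{\La(x^*,t^*,E)}U$ is still attained at $(x^*,t^*)$. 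For $r>0$ small enough, $\overline\Xi(x^*,t^*;r)\subset E$ and $\Xi(x^*,t^*;r)\subset\La(x^*,t^*,E)$ by Remark \ref{remark1}. Applying Theorem \ref{MV-OU} at $(x^*,t^*)$, using $U\leq M$ on $\Xi(x^*,t^*;r)$ and the positivity of $K^{\rm OU}_{x^*,t^*}$, gives
\[
M \;\leq\; \frac{1}{(4\pi r)^{n/2}}\iint_{\Xi(x^*,t^*;r)} U(y,s)\,K^{\rm OU}_{x^*,t^*}(y,s)\, dy ds \;\leq\; \frac{M}{(4\pi r)^{n/2}}\iint_{\Xi(x^*,t^*;r)} K^{\rm OU}_{x^*,t^*}(y,s)\, dy ds.
\]
Because the constant function $1$ belongs to $\Tou(E)$, Theorem \ref{main} forces $\iint_{\Xi(x^*,t^*;r)} K^{\rm OU}_{x^*,t^*}\, dyds = (4\pi r)^{n/2}$; hence all three quantities are equal, and continuity of $U$ together with positivity of the kernel forces $U\equiv M$ on $\Xi(x^*,t^*;r)$. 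By Remark \ref{varphi}, a nontrivial initial portion of $\ga$ past $\tau^*$ lies in this ball, contradicting the definition of $\tau^*$. Hence $\tau^*=1$ and $U(y_0,s_0)=M$.

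For the Hermite statement I would reduce to the OU case by means of a sign-preserving global shift. Set $M:=U(x_0,t_0)\geq 0$ and $W(x,t) := (U(x,t)-M)\,e^{nt}e^{|x|^2/2}$. A direct computation gives
\[
\partial_t(U-M)-(\Dt-|x|^2)(U-M) \;=\; \partial_t U-\Dt U+|x|^2U-|x|^2 M \;\leq\; -|x|^2 M \;\leq\; 0,
\]
so $U-M\in\sTh(E)$, and then \eqref{Lis} yields $W\in\sTou(E)$. Since $W(x_0,t_0)=0$ while $W\leq 0$ on $\La(x_0,t_0,E)$, the already-established OU case applied to $W$ gives $W\equiv 0$ on $\La(x_0,t_0,E)$, whence $U\equiv M$ there.

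The main obstacle I anticipate is the geometric bookkeeping in the OU continuation step: one must confirm that the deformed ``heat balls'' $\Xi(x^*,t^*;r)$ lie inside $\La(x^*,t^*,E)$ for all sufficiently small $r$ and are tangent from below to the horizontal hyperplane through $(x^*,t^*)$, so that the polygonal path $\ga$ actually enters $\Xi(x^*,t^*;r)$ just past $\tau^*$. Both properties are recorded in Remarks \ref{remark1} and \ref{varphi} and are inherited from the classical heat balls via the diffeomorphism $\phi$; the work is therefore in invoking them at the right points rather than in any new analytic estimate. Once the OU case is settled, the Hermite case is immediate from the sign condition $M\geq 0$.
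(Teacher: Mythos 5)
Your proof is correct. The OU half is essentially the paper's own argument: mean value inequality, $U\leq M$ on the ball, normalization of the kernel via the fact that constants lie in $\Tou$ (the paper phrases this as ``the constant $M\in\Tou$, so the last step is an equality''; your use of $1\in\Tou(E)$ and Theorem \ref{main} is the same point), followed by the standard polygonal continuation using Remarks \ref{remark1} and \ref{varphi}. Where you genuinely diverge is the Hermite half: the paper reruns the mean-value chain directly in $\sTh$, closing the final inequality by observing that the constant $-M$ belongs to $\sTh$ precisely when $M\geq0$; you instead note that $U-M\in\sTh$ (the same sign condition $M\geq 0$ enters through the term $-|x|^2M\leq 0$) and transfer via $W=(U-M)e^{nt}e^{|x|^2/2}\in\sTou$, then quote the already-proved OU case for $W$. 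Your route buys you a single continuation argument rather than two parallel ones, at the small cost of verifying the OU hypothesis for $W$ --- which does hold, since $W\leq 0=W(x_0,t_0)$ on $\La(x_0,t_0,E)$ and the proof of the OU case only uses that bound. Both arguments exploit $M\geq0$ in an equivalent way, so this is a clean repackaging rather than a new idea, and it is fully rigorous.
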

\begin{proof} Let $M:={U(x_0, t_0)}$. Assume first that $U\in\sTh(E)$ and hence that $M\geq0$.
Observe from \eqref{Hleq} that the constant $-M$ also belongs to $\sTh$.
If $r > 0 $ is sufficiently small, then using twice the mean value property \eqref{Vxt} and the positivity of the involved kernel one obtains
\Bea
U(x_0,t_0) & \leq & \frac{1}{(4\pi r)^\frac n2} \iint_{\Xi(x_0,t_0;r)} U(y,s) K^{\rm H}_{x_0,t_0} (y,s) dy ds \nonumber\\
       & \leq & 
                  \frac{1}{(4\pi r)^\frac n2} \iint_{\Xi(x_0,t_0;r)} M \cdot K^{\rm H}_{x_0,t_0} (y,s) dy ds
         \;\leq\; M.\label{Maux1}
\Eea
Thus equality holds in the middle expressions, and hence 
\[
\iint_{\Xi(x_0,t_0;r)} \big[M-U(y,s)\big]\, K^{\rm H}_{x_0,t_0} (y,s)\, dy ds =0,
\]
which implies that $U\equiv M$  within
$\Xi(x_0,t_0;r)$.

\

To prove constancy in all $\La(x_0,t_0,E)$ we follow a standard argument, see e.g. \cite[p.56]{evans}. 
Let $(y_0 ,s_0) \in \Lambda(x_0, t_0, E)$, i.e. $s_0 < t_0$, and let $\gamma$ be a polygonal path
joining $(x_0,t_0)$ to $(y_0, s_0)$ for which the temporal variable is strictly decreasing. Consider
\begin{equation*}
s_1: = \min \Big\{ s \in [s_0,t_0] \mid U|_{\{\gamma\}\cap \R^n\times[s,t_0]}\equiv M \Big\} ,
\end{equation*}
where $ \{ \gamma \} $ is the image set of $\gamma$. The set in the brackets is non-empty, and since $U$ is continuous, the minimum is attained.
Assume $s_1 > s_0$. Then $U(z_1,s_1) = M$ for some point $(z_1,s_1) \in \{ \gamma \} $
and so, as before, $U$ is identically equal to $M$ within $\Xi(z_1,s_1;r)$
for all sufficiently small $r > 0$. But, by Remark \ref{varphi}, the ball
$\Xi(z_1,s_1;r)$
contains $ \{ \gamma \} \cap \reales^n \times [s_1- \epsilon, s_1) $ for some small
$\epsilon$, which is a contradiction. Therefore, $s_1 = s_0$ and then $U\equiv M$ 
on $\{ \gamma \}$. This completes the proof when $U\in \sTh$ and $M\geq0$.

When $U\in\sTou$ one applies exactly the same argument, this time using that
the constant $M\in\Tou$, and hence the last step in \eqref{Maux1} is an equality.
\end{proof}

From the previous result one can obtain versions of the \emph{weak maximum principle}. We state one which is valid for any open set $E\subset\R^{n+1}$ (possibily unbounded).

\begin{corollary} \label{weak1} {\bf (Weak maximum principle)}
Let $U\in \sTou(E)$  and  $t_0 \in \SR$. Assume that for some $A\in\R$ it holds
\Be
\limsup_{{(x,t)\to(x_1,t_1)}\atop{(x,t)\in E_{t_0}}} \, U(x,t)\; \leq \; A,
\quad \forall\; (x_1,t_1)\in \partial_{\rm P} \big(E_{t_0}\big)\cup\{\infty\}
\footnote{In this expression the infinity point only plays a role if $\Etz$ is unbounded.}
\label{limsup}
\Ee
where $\partial_{\rm P} \big(E_{t_0}\big):=\partial \big(E_{t_0}\big)\setminus\big[\{t=t_0\}\cap E\big]$.
Then
\Be
U(x,t)\leq A,\quad \forall\;(x,t)\in E_{t_0}.
\label{supUA}
\Ee
The same holds for $U\in\sTh(E)$ when, additionally, $\sup_{E_{t_0}} U\geq 0$.
\end{corollary}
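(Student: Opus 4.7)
The plan is to argue by contradiction. Set $M := \sup_{E_{t_0}} U$ and assume $M > A$; the goal is to locate a point at which $M$ is attained in $E$, so that Corollary \ref{StrongMaximum} can be invoked.

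To produce such a point, I would take a maximizing sequence $(x_k,t_k)\in E_{t_0}$ with $U(x_k,t_k)\to M$, and set $\delta:=(M-A)/2>0$. The hypothesis \eqref{limsup} supplies, for each $(x_1,t_1)\in\partial_{\rm P}(E_{t_0})$, a neighborhood on which $U\leq A+\delta$ within $E_{t_0}$, and an $R>0$ such that $U\leq A+\delta$ on $E_{t_0}\cap\{|(x,t)|>R\}$. Since $U(x_k,t_k)>A+\delta$ eventually, the sequence stays in a bounded subset separated from $\partial_{\rm P}(E_{t_0})$, whose closure is a compact subset of $E$ contained in $\{t\leq t_0\}$. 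Passing to a subsequence, $(x_k,t_k)\to(x^*,t^*)\in E$ with $t^*\leq t_0$, and continuity of $U$ on $E$ gives $U(x^*,t^*)=M$. Note that the case $t^*=t_0$ is not excluded, but the limit still sits in $E$.

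Next, I would apply Corollary \ref{StrongMaximum} at $(x^*,t^*)$. The crucial observation is that $\Lambda(x^*,t^*,E)\subset E_{t_0}$, since any point reachable from $(x^*,t^*)$ by a strictly decreasing polygonal path has $t<t^*\leq t_0$. In particular $\sup_{\Lambda(x^*,t^*,E)} U \leq M$, and the reverse inequality is obtained by approaching $(x^*,t^*)$ from below along a short line segment and invoking continuity, so the hypothesis of the strong maximum principle holds. In the $\sTou$ case the corollary applies directly; in the $\sTh$ case the extra assumption $\sup_{E_{t_0}} U\geq 0$ forces $M\geq 0$, supplying the sign condition $U(x^*,t^*)\geq 0$ required by the Hermite version. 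Either way, $U\equiv M$ on $\Lambda(x^*,t^*,E)$.

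The contradiction is then extracted by extending a strictly decreasing polygonal path $\gamma\subset E$ from $(x^*,t^*)$ maximally. A standard continuation argument forces $\gamma$ to leave every compact subset of $E$, so it must accumulate either at a point of $\partial E\cap\{t<t_0\}\subset\partial_{\rm P}(E_{t_0})$ or at $\infty$. Along $\gamma\subset \Lambda(x^*,t^*,E)\subset E_{t_0}$ we have $U\equiv M$, which contradicts $\limsup U\leq A<M$ at the accumulation point. The main obstacle I anticipate is the bookkeeping in the first step, namely ruling out that the sup is only approached at the excluded part $\{t=t_0\}\cap E$ of the boundary; this is handled uniformly by allowing $(x^*,t^*)$ to lie in $E$ with $t^*\leq t_0$, and the hypothesis $\sup U\geq 0$ is needed precisely to meet the sign requirement of the Hermite strong maximum principle.
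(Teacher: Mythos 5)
Your argument is correct and follows essentially the same route as the paper's proof: a maximizing sequence whose limit point is forced into $E$, the strong maximum principle giving $U\equiv M$ on $\Lambda(x^*,t^*,E)$, and then a descending path carrying the value $M$ to the parabolic boundary or to infinity, contradicting the $\limsup$ hypothesis. The only difference is cosmetic (you run a single contradiction with a $\delta$-margin where the paper splits directly into the cases of an unbounded sequence, a limit on $\partial_{\rm P}(E_{t_0})$, and a limit in $E$), so no further comment is needed.
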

\begin{proof}
Let $M:=\sup_{E_{t_0}} U $. We must show that $M\leq A$. Pick a sequence of points $\{P_n\}_{n\geq 1}\subset \Etz$ such that $U(P_n)\nearrow M$. There are several cases: 

\sline (i) If $\{P_n\}_{n\geq1}$ is not bounded, then
\Be
M=\lim_{n\to\infty}U(P_n)\leq 
\limsup_{{(x,t)\to\infty }\atop{(x,t)\in E_{t_0}}}U(x,t) \leq A.
\label{MinfA}
\Ee
 
\sline (ii) If $\{P_n\}_{n\geq1}$ is bounded, then passing to a subsequence
we may assume that $P_n$ converges to some $P\in {\overline{\Etz}}$.
Assume first that  $P\in \partial_{\rm P} \Etz$. In this case we directly use \eqref{limsup} to obtain
\Be
M=\lim_{n\to\infty}U(P_n)\leq 
\limsup_{{(x,t)\to P }\atop{(x,t)\in E_{t_0}}}U(x,t) \leq A.
\label{MQA}
\Ee
\sline (iii) Finally, assume that the point $P$ in case (ii) belongs to 
the remaining set, namely  $P\in \Etz$ or $P\in E\cap\{t=t_0\}$. In either case $U(P)=M$ and the strong maximum principle in Corollary \ref{StrongMaximum} shows that\footnote{In the case $U\in\sTh(E)$ this step uses the assumption $M\geq0$ stated after \eqref{supUA}.} $U\equiv M$ in $\La(P,E)$.
If $\La(P,E)$ is unbounded, then we can find a sequence $Q_n\in \La(P,E)$ going to $\infty$, so that
\eqref{MinfA} holds (with $Q_n$ in place of $P_n$), and we would be done. If $\La(P,E)$ is bounded, then we pick any straight line $\ga(s)$ with $\ga(0)=P$ and strictly decreasing in the $t$-variable.
Consider the finite positive number
\[
s^*=\min\{s>0\mid \ga(s)\not\in \La(P,E)\},
\]
and the point $Q=\ga(s^*)$. By construction, $Q$ must belong to $\partial_P(E_{t_0})$. Then if $s_n\nearrow s^*$ we have $Q_n=\ga(s_n)\in \La(P,E)$,
so \eqref{MQA} holds with $P_n,P$ replaced by $Q_n,Q$.
This completes the proof of $M\leq A$ in all cases.
\end{proof}

In the special case of bounded domains $E$ and continuous functions up to the boundary one recovers the following classical statement.

\begin{corollary} \label{WeakMaximum}
Let $E\subset\R^{n+1}$ be open and bounded, and let $U\in C(\overline{E})\cap \sTou(E)$.
Then, for every  $t_0\in \SR$
\Be
\max_{{\overline{\Etz}}} \, U\; = \; \max_{\partial_{\rm P}\Etz} \, U.
\label{maxLap}
\Ee
The same holds when $U\in\sTh(E)$ if, additionally, the left hand-side of \eqref{maxLap} is non-negative.
\end{corollary}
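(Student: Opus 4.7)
The plan is to obtain Corollary \ref{WeakMaximum} as an essentially immediate consequence of the more general weak maximum principle in Corollary \ref{weak1}. The key observation is that when $E$ is bounded and $U$ extends continuously to $\overline E$, the $\limsup$ hypothesis \eqref{limsup} required by Corollary \ref{weak1} is automatically satisfied, with $A$ chosen to be the actual maximum on the parabolic boundary.

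More precisely, first I would note that if $E_{t_0}=\emptyset$ the statement is vacuous, so I assume $E_{t_0}\neq\emptyset$. Since $E$ is bounded and $E_{t_0}\subset E$, the set $\overline{E_{t_0}}$ is compact and contained in $\overline E$, so by continuity of $U$ both sides of \eqref{maxLap} are attained as genuine maxima. Now I set
\[
A:=\max_{\partial_{\rm P}E_{t_0}}U,
\]
and verify the hypothesis of Corollary \ref{weak1}. The $\infty$ case is vacuous because $E_{t_0}$ is bounded, while for any $(x_1,t_1)\in\partial_{\rm P}E_{t_0}$, the inclusion $\partial_{\rm P}E_{t_0}\subset\partial(E_{t_0})\subset\overline{E_{t_0}}\subset\overline E$ together with continuity of $U$ on $\overline E$ yields
\[
\limsup_{{(x,t)\to(x_1,t_1)}\atop{(x,t)\in E_{t_0}}}U(x,t)\;=\;U(x_1,t_1)\;\leq\;A.
\]
Corollary \ref{weak1} then gives $U(x,t)\leq A$ on $E_{t_0}$, and by continuity the same bound holds on $\overline{E_{t_0}}$. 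Since trivially $\max_{\overline{E_{t_0}}}U\geq\max_{\partial_{\rm P}E_{t_0}}U=A$, I conclude that \eqref{maxLap} holds.

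For the Hermite case, I would apply exactly the same argument using the second part of Corollary \ref{weak1}, which requires the extra condition $\sup_{E_{t_0}}U\geq 0$. The point is that by continuity and compactness,
\[
\sup_{E_{t_0}}U\;=\;\sup_{\overline{E_{t_0}}}U\;=\;\max_{\overline{E_{t_0}}}U,
\]
(every point of $\overline{E_{t_0}}$ being a limit of points in the open set $E_{t_0}$), so the nonnegativity assumption on the left-hand side of \eqref{maxLap} is precisely the hypothesis needed to invoke Corollary \ref{weak1} in the Hermite setting.

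I do not anticipate a genuine obstacle here; the whole content of Corollary \ref{WeakMaximum} reduces to unpacking definitions once Corollary \ref{weak1} is available. The only minor point worth stating carefully is the identification $\sup_{E_{t_0}}U=\max_{\overline{E_{t_0}}}U$ in the Hermite case, to match the non-negativity conditions in the two statements.
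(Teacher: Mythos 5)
Your proposal is correct and coincides with the paper's own proof, which is the one-line instruction ``Use the previous corollary with $A=\max_{\partial_{\rm P}E_{t_0}}U$''; you have simply spelled out the routine verifications (the $\limsup$ condition via continuity on $\overline E$, the vacuity of the point at infinity for bounded $E$, and the matching of the non-negativity hypotheses in the Hermite case) that the paper leaves implicit.
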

\begin{proof}
Use the previous corollary with $A=\max_{\partial_{\rm P} \Etz}U$.
\end{proof}

\begin{remark}\label{R1}
For band domains $E=\O\times(0,T_0)$, with $\O\subset\R^n$ a bounded open set,  \eqref{maxLap} takes the form
\[
\max_{{\overline{\O}}\times[0,t_0]} \, U\; = \;\max_{({\overline{\O}}\times\{0\})\cup(\partial\O\times[0,t_0])}
\!\!\!U,\quad  \mbox{if $t_0\in (0,T_0]$}.
\]
That is, the maximum of $U$ in ${\overline{\O}}\times[0,t_0]$ is always attained at some point of its \emph{parabolic boundary}; \cite[p. 52]{evans}.
\end{remark}

\

Finally, as in \cite[p. 56]{evans}, we deduce a corollary about infinite propagation speed. Here we say that $U$ is a 
\emph{supertemperature}, denoted $U\in\supTou$, if $-U\in\sTou$ (and likewise for  $\supTh$).

\begin{corollary}{\bf (Infinite propagation)}. Let $E=\O\times(0,T_0)$, with $\O\subset\R^n$ bounded, open and connected.
Let $U\in C(\bar E)$ belong to  the class $\supTou(E)$ or $\supTh(E)$.
Assume that 
\Be
U\geq 0 \mbox{ in $\partial_{\rm P} E$},\mand \exists\;x_0\in \O\mbox{ such that }U(x_0,0)>0.
\label{infprop}
\Ee
Then $U(x,t)>0$ at all $(x,t)\in E$.
\end{corollary}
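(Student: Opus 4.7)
The plan is to split the argument into two stages: first establish $U \geq 0$ everywhere in $E$, and then upgrade to strict positivity by ruling out interior zeroes via the strong maximum principle.

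For the first stage, suppose for contradiction that $U(x^*, t^*) < 0$ at some $(x^*, t^*) \in E$, and pick $t_0 \in (t^*, T_0)$ with $t_0$ finite. Since $E_{t_0} = \O \times (0, t_0)$ is bounded, $U \in C(\overline{E_{t_0}})$, and $-U$ lies in $\sTou(E)$ or $\sTh(E)$, I would apply the weak maximum principle (Corollary \ref{WeakMaximum}) to $-U$ on $E_{t_0}$. In the Hermite case the side condition $\max_{\overline{E_{t_0}}}(-U) \geq 0$ is automatic since $-U(x^*, t^*) > 0$. Since $U \geq 0$ on $\partial_{\rm P} E$ and $\partial_{\rm P} E_{t_0} \subset \partial_{\rm P} E$, the principle yields $-U \leq \max_{\partial_{\rm P} E_{t_0}}(-U) \leq 0$ on $\overline{E_{t_0}}$, contradicting $-U(x^*, t^*) > 0$. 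Hence $U \geq 0$ throughout $E$.

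For the second stage, suppose for contradiction that $U(x_1, t_1) = 0$ at some $(x_1, t_1) \in E$. Then $-U$ attains its supremum $0$ over $\La(x_1, t_1, E)$ at $(x_1, t_1)$, and the side condition $(-U)(x_1, t_1) = 0 \geq 0$ needed in the Hermite case holds trivially. The strong maximum principle (Corollary \ref{StrongMaximum}) then forces $U \equiv 0$ on $\La(x_1, t_1, E)$. To propagate this to the initial point $(x_0, 0)$, I would use that $\O$, being open and connected in $\R^n$, is polygonally path-connected: pick a polygonal path $\sigma : [0,1] \to \O$ with $\sigma(0) = x_1$ and $\sigma(1) = x_0$, and for each $\e \in (0, t_1)$ define
\[
\gamma_\e(s) = \bigl(\sigma(s),\, t_1 - s(t_1 - \e)\bigr), \qquad s \in [0,1].
\]
This is a polygonal path contained in $E = \O \times (0, T_0)$ joining $(x_1, t_1)$ to $(x_0, \e)$ whose temporal coordinate is strictly decreasing, so $(x_0, \e) \in \La(x_1, t_1, E)$ and $U(x_0, \e) = 0$. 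Letting $\e \to 0^+$ and using continuity of $U$ on $\overline{E}$ yields $U(x_0, 0) = 0$, contradicting the hypothesis.

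The only mildly delicate step is the final path construction, which must simultaneously lie in $E$ and be \emph{strictly} time-decreasing while landing arbitrarily close to $(x_0, 0)$. Combining a spatial polygonal path in $\O$ (available by connectedness) with an affine time coordinate produces such a path; every other step is a direct invocation of the maximum principles already derived from the mean value formula.
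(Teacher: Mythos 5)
Your proof is correct and follows essentially the same route as the paper: apply the weak maximum principle (Corollary \ref{WeakMaximum}) to $-U$ on the band domain to get $U\geq 0$, then use the strong maximum principle (Corollary \ref{StrongMaximum}) to propagate any interior zero down to $t=0^+$ and contradict $U(x_0,0)>0$ by continuity. The paper merges your two stages into a single contradiction argument and simply cites the earlier observation that $\La(x_1,t_1,E)=\O\times(0,t_1)$ for band domains instead of re-deriving the polygonal path, but the content is identical.
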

\begin{proof}
Suppose, for contradiction, that there exists $P=(x_1,t_1)\in E$ such that $U(P)\leq 0$. Call $V=-U$, which is a subtemperature.
Then $\sup_E V\geq V(P)\geq 0$, so we can use Corollary \ref{WeakMaximum} (in the special case of Remark \ref{R1})
to deduce that
\[
\sup_E V =\sup_{\partial_{\rm P} E} V \leq 0.\]
But then 
\[
V(P)=\sup_E V=0,\]
which in turn, by Corollary \ref{StrongMaximum} and the connectivity of $\O$, implies that $V(x,t)\equiv 0$ if $t\in (0,t_1)$, $x\in\O$.
This contradicts the second condition in \eqref{infprop}.
\end{proof}

%
%

\

\section{Uniqueness of solutions} \label{Uniq}

We next derive some uniqueness results for the Cauchy problem when $E=\SR^n\times(0,T_0)$ with $0<T_0\leq \infty$.
Let $L$ denote one of the operators
\Be
\Dt,\quad \Dt-2x\cdot\nabla\quad\mbox{or}\quad \Dt-|x|^2.
\label{ops}
\Ee
Given $f\in C(\SR^n\times(0,T_0))$ and $g\in C(\SR^n)$, 
we say that $u(x,t)$ is a \emph{classical solution} of
\Be
\left\{
\begin{tabular}{ll}
        	$u_t = Lu +f$ & in $\SR^n\times (0,T_0)$ \\
            $u(x,0) = g$ & on $\SR^n$,
        \end{tabular}
\right.
\label{P}
\Ee
whenever $u$ belongs to $C\big(\SR^n\times[0,T_0)\big)\cap C^{2,1}\big(\SR^n\times(0,T_0)\big)$ and satisfies \eqref{P}.
When $L=\Dt$, a well-known condition to ensure uniqueness is 
\Be
\sup_{0<t<T_0}|u(x,t)|\leq Ae^{c|x|^2},\quad x\in\SR^n,
\label{Ty}
\Ee
for some $A,c>0$; see e.g. \cite[Thm 2.3.7]{evans}. On the other hand, for every $\e>0$ there exist infinitely many solutions of \eqref{P} with $f=g=0$ and
\Be
\sup_{0<t<T_0}|u(x,t)|\leq e^{|x|^{2+\e}},\quad x\in\SR^n.
\label{Tye}
\Ee 
When $L$ is one of the last two operators in \eqref{ops}, the condition \eqref{Ty} is also sufficient for uniqueness.
This could be proved from the above maximum principles, but is also a special case of results for general
 parabolic equations; see e.g. \cite[Theorem 2.4.10]{friedman}.
Here we attempt to replace \eqref{Ty} by an \emph{optimal} growth condition, which goes back to the work of T\"acklind \cite{Tac}.  

\begin{definition} 
Let $p(r)$ be a positive and continuous function for $r\geq1$. We define the class $\mathscr{C}(p)$ as the set of all 
 $u\in C(\SR^n\times[0,T_0))$ such that
\Be
\sup_{0<t<T_0}|u(x,t)|\leq A e^{|x|p(|x|)},\quad \forall\;|x|\geq1,
\label{Ta}
\Ee
for some constant $A>0$ (which may depend on $u$ and $T_0$). 
\end{definition}

Below we also consider the function \[
\bp(r):=\inf_{s\geq r} p(s),\]
that is, the largest non-decreasing minorant of $p(r)$. Note that $\bp(r)$ is also continuous, and that $\bp(r)=p(r)$ when $p$ is non-decreasing.

\begin{theorem}[T\"acklind \cite{Tac}] \label{heat-T}
Let $p(r)$ be positive, continuous for $r\geq1$. Then, the following are equivalent for the operator $L=\Dt$

\sline (a) the equation \eqref{P} has at most one classical solution
$u$ in the class $\mathscr{C}(p)$ 

\sline (b) the function $p$ satifies
\Be
\int_1^\infty\frac{dr}{\bp(r)}=\infty.
\label{intbp}
\Ee
\end{theorem}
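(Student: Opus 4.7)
The plan is to address the two implications separately.

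\textbf{Uniqueness, $(b) \Rightarrow (a)$.} By linearity, it suffices to show that any $u \in \bC(p)$ solving $u_t = \Dt u$ in $\R^n \times (0,T_0)$ with $u(x,0) \equiv 0$ must vanish identically. I plan to work on cylinders $B_R \times (0, \tau)$ with $\tau > 0$ small, iterating on strips $[k\tau,(k+1)\tau]$ to cover $[0,T_0)$. The key is to produce a \emph{T\"acklind barrier} $W_R$: a classical supertemperature on $B_R \times (0, \tau)$ satisfying $W_R(x,0) \geq 0$, $W_R(x,t) \geq A\,e^{|x|p(|x|)}$ on the lateral boundary $|x|=R$, and $W_R(x_0,t_0) \to 0$ as $R \to \infty$ for any fixed interior $(x_0,t_0)$. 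Applying the weak maximum principle (Corollary \ref{weak1}) to $\pm u - W_R$ on the cylinder yields $|u| \leq W_R$ in the interior, and letting $R \to \infty$ gives $u \equiv 0$ on $\R^n \times (0,\tau]$.

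\textbf{Construction of the barrier.} I take $W_R$ of the form $W_R(x,t) = C\exp\bigl(H(|x|)-G(\tau-t)\bigr)$, with $H$ and $G$ increasing convex functions to be determined. A direct computation shows that the supertemperature condition $(\partial_t - \Dt) W_R \geq 0$ reduces to the pointwise inequality
\[
G'(\tau-t) \;\geq\; H''(r) + \tfrac{n-1}{r} H'(r) + H'(r)^2, \qquad r=|x|,
\]
while the boundary requirement forces $H(R) \gtrsim R\,p(R)$ for the chosen radii $R$. I prescribe $H'(r) \sim c\,\bp(r)$, which converts the supersolution inequality into $G'(s) \gtrsim \bp(r(s))^2$; a Legendre-type inversion gives $G$ finite on $(0,\tau)$ \emph{precisely} when $\int_1^\infty dr/\bp(r) = \infty$, which in turn keeps the small-time blow-up of $G$ mild enough that $W_R(x_0,t_0) \to 0$ as $R\to\infty$. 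Working with $\bp$ in place of $p$ loses no generality for the boundary control, since by definition of the monotone minorant one can select a sequence $R_k \to \infty$ with $p(R_k) = \bp(R_k)$ and apply the argument along those radii. The main obstacle I expect is this three-way balancing act between the boundary dominance forced on $H$, the supersolution condition tying $G'$ to $(H')^2$, and the interior smallness required of $W_R$; the divergence hypothesis is exactly the quantitative input that makes the balance feasible.

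\textbf{Sharpness, $(a) \Rightarrow (b)$.} Assuming $\int_1^\infty dr/\bp(r) < \infty$, I construct a nontrivial solution with zero initial data using the classical Tychonoff ansatz (treating $n=1$, with the case $n \geq 2$ reduced by tensor products or radialization):
\[
u(x,t) \,=\, \sum_{k=0}^\infty \frac{g^{(k)}(t)}{(2k)!}\, x^{2k}.
\]
The heat equation is then satisfied formally, and the problem reduces to producing $g \in C^\infty(\R)$ which vanishes to infinite order at $t=0$, is nontrivial for $t > 0$, and whose derivatives satisfy bounds $|g^{(k)}(t)| \leq M_k$ making the series absolutely convergent with $|u(x,t)| \leq A\,e^{|x|p(|x|)}$. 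Taking $g(t) = \exp(-\Psi(1/t))$ for $\Psi$ built from the primitive of $1/\bp$, Cauchy estimates on a complex strip of suitable width yield the needed $(M_k)$; the \emph{finiteness} of $\int_1^\infty dr/\bp(r)$ is exactly what makes $\Psi$ grow slowly enough for these Cauchy bounds to be compatible with the prescribed growth in $x$. The resulting $u$ then lies in $\bC(p)$ and witnesses non-uniqueness, completing the equivalence.
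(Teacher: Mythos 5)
First, a point of reference: the paper does not prove Theorem \ref{heat-T} at all --- it is quoted as a known result of T\"acklind \cite{Tac} and used as a black box in the proof of Theorem \ref{UniqT} (with \cite[(35)]{Tac} cited for the counterexample), so there is no internal proof to compare against. Your sketch of the sharpness half $(a)\Rightarrow(b)$ is the standard route and essentially T\"acklind's own: a Tychonoff series $\sum_k g^{(k)}(t)x^{2k}/(2k)!$ with $g$ flat at $t=0$ and lying in a non-quasi-analytic class tailored to $\bp$; the reduction to $n=1$ by taking a function of a single coordinate is legitimate since $r\bp(r)$ is nondecreasing. That half is a reasonable (if detail-light) sketch.

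The genuine gap is in the architecture of the uniqueness half $(b)\Rightarrow(a)$. With the ansatz $W_R=C\exp\bigl(H(|x|)-G(\tau-t)\bigr)$, the supersolution inequality $G'(\tau-t)\geq H''+\tfrac{n-1}{r}H'+(H')^2$ must hold for \emph{all} $r<R$ and \emph{all} $t\in(0,\tau)$ simultaneously, so $\inf_s G'(s)\gtrsim \sup_{r<R}H'(r)^2=:M^2$, whence $G(\tau)\gtrsim \tau M^2$; combining this with the lateral requirement $H(R)\geq Rp(R)+G(\tau)$ and the trivial bound $H(R)\leq RM$ gives $RM-c\tau M^2\geq Rp(R)$, which is solvable in $M$ only if $\tau\lesssim R/p(R)$. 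For any superlinear admissible $p$ (e.g.\ $p(r)=r\log r$, which satisfies (b)), this forces $\tau\to0$ as $R\to\infty$, so no fixed strip height survives the limit $R\to\infty$ and the iteration over strips $[k\tau,(k+1)\tau]$ never starts; the same obstruction persists for concave $H$ (work with $F=e^{H}$ and $F''\leq\kappa F$) and is not removed by any ``Legendre-type inversion.'' The actual mechanism in T\"acklind's proof, which your sketch omits, is an iteration over expanding spatial shells $R_k\leq|x|\leq R_{k+1}$ (chosen where $p=\bp$): on each shell one gains a time increment $\delta_k\approx (R_{k+1}-R_k)/\bp(R_{k+1})$, and hypothesis (b) enters precisely through the divergence of $\sum_k\delta_k$ --- a Riemann sum for $\int_1^\infty dr/\bp(r)$ --- which lets the accumulated time reach any $T_0$. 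Without this shell decomposition the proof does not close. (A minor further point: the maximum principle you invoke should be the classical one for $\sT$ on bounded cylinders; Corollary \ref{weak1} of the paper is stated for $\sTou$ and $\sTh$.)
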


In practice, one would often use \eqref{Ta} with functions $p(r)$ which are eventually increasing, such as 
$p(r)=r(\log r)(\log\log r)\cdots (\log^{\mbox{{\tiny$(k)$}}}\!\!r)$, for $r\geq e^{\udots^{e}}$. 
In that case, the condition \eqref{intbp} is equivalent to
\Be
\int_\al^\infty\frac{dr}{p(r)}=\infty,\quad \mbox{for some $\al\geq1$}.
\label{intp}
\Ee
In general, however, when $p$ oscillates, \eqref{intbp} may not necessarily imply \eqref{intp}.
We now give a simple criterion which guarantees the equivalence of both conditions. 
The proof 
is a slight modification of T\"acklind original argument \cite[p. 16]{Tac}; see also \cite[p. 396]{Hayne}.

\begin{lemma}\label{Lem1}
Let $p(r)$ be positive and continuous for $r\geq1$. Assume also that 
for some $\ga\geq0$ the function  $r^\ga p(r)$ is non-decreasing. Then \eqref{intbp} is equivalent to \eqref{intp}.
\end{lemma}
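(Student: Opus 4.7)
One direction is immediate: since $\bp \leq p$ pointwise, $1/\bp \geq 1/p$, so $\int_1^\infty dr/p(r) = \infty$ forces $\int_1^\infty dr/\bp(r) = \infty$. The bulk of the proof will be the converse: assuming $\int_1^\infty dr/p < \infty$, show that $\int_1^\infty dr/\bp < \infty$. The case $\ga=0$ is trivial ($p$ is then non-decreasing, so $\bp = p$), so I will assume $\ga > 0$.

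The key tool will be a pointwise bootstrapping inequality. The hypothesis $r^\ga p(r) \leq s^\ga p(s)$ for $r \leq s$ implies, on the interval $s \in [r, 2^{1/\ga} r]$, that $p(s) \geq (r/s)^\ga p(r) \geq p(r)/2$. Splitting $1/\bp(r) = \sup_{s \geq r} 1/p(s)$ into this interval and its complement $[2^{1/\ga} r, \infty)$ yields the recursion
\[
\frac{1}{\bp(r)} \;\leq\; \frac{2}{p(r)} + \frac{1}{\bp(2^{1/\ga} r)},
\]
which I plan to iterate $N$ times to obtain
\[
\frac{1}{\bp(r)} \;\leq\; 2 \sum_{k=0}^{N-1} \frac{1}{p(2^{k/\ga} r)} + \frac{1}{\bp(2^{N/\ga} r)},
\]
and then let $N \to \infty$. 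The main obstacle will be justifying that the remainder $1/\bp(2^{N/\ga} r) \to 0$, which amounts to showing $\bp(r) \to \infty$ as $r \to \infty$. I expect to obtain this from a companion estimate in the opposite direction: for $s \in [r/2, r]$ the hypothesis gives $p(s) \leq (r/s)^\ga p(r) \leq 2^\ga p(r)$, so $\int_{r/2}^{r} ds/p(s) \geq c_\ga\, r/p(r)$. Since the tail $\int_{r/2}^{r} 1/p$ vanishes as $r \to \infty$ by the assumption $\int_1^\infty 1/p < \infty$, this forces $p(r) \to \infty$, and hence $\bp(r) = \inf_{s \geq r} p(s) \to \infty$ as well.

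With the remainder disposed of, the final step is to integrate the bound $1/\bp(r) \leq 2 \sum_{k=0}^\infty 1/p(2^{k/\ga} r)$ over $r \in [1, \infty)$. The change of variable $u = 2^{k/\ga} r$ reduces the $k$-th term to $2^{-k/\ga} \int_{2^{k/\ga}}^\infty du/p(u)$, and summing the geometric series $\sum_{k \geq 0} 2^{-k/\ga} < \infty$ (whose convergence crucially uses $\ga > 0$) gives
\[
\int_1^\infty \frac{dr}{\bp(r)} \;\leq\; \frac{2}{1 - 2^{-1/\ga}} \int_1^\infty \frac{dr}{p(r)} \;<\; \infty,
\]
completing the argument.
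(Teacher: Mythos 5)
Your proof is correct, but it follows a genuinely different route from the paper's. The paper proves the nontrivial implication \eqref{intbp} $\Rightarrow$ \eqref{intp} directly: after disposing of the case $\liminf_{r\to\infty}p(r)<\infty$ (where \eqref{intp} holds automatically), it builds T\"acklind's sequence $\ell_j$ of points where $\bp(\ell_j)=p(\ell_j)$ with $\ell_j\geq 2\ell_{j-1}$, shows that \eqref{intbp} is equivalent to $\sum_j \ell_j/p(\ell_j)=\infty$, and then converts that divergent sum into \eqref{intp} using the monotonicity of $r^\ga p(r)$. You instead argue by contraposition via the self-improving inequality $1/\bp(r)\leq 2/p(r)+1/\bp(2^{1/\ga}r)$, iterate it, kill the remainder by first showing $p(r)/r\to\infty$ from the convergence of $\int_1^\infty dr/p$, and sum a geometric series. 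Both arguments are sound and each step of yours checks out (the splitting of the supremum defining $1/\bp$, the bound $p(s)\geq p(r)/2$ on $[r,2^{1/\ga}r]$, the companion lower bound $\int_{r/2}^r ds/p(s)\geq c_\ga r/p(r)$, and the Tonelli interchange at the end). What your route buys is a quantitative estimate, $\int_1^\infty dr/\bp(r)\leq \tfrac{2}{1-2^{-1/\ga}}\int_1^\infty dr/p(r)$, and it dispenses with the case analysis on $\liminf p$; what the paper's route buys is the intermediate equivalence \eqref{intp=sum} with the sequence $\ell_j$, which is reused verbatim in the proof of Lemma \ref{Lem2}, so if you adopted your argument you would need to re-derive (or separately prove) that ingredient there.
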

\begin{proof}
Since $\bp(r)\leq p(r)$, we only need to show that \eqref{intbp} $\Rightarrow$ \eqref{intp}. We first notice that, if $\liminf_{r\to\infty}p(r)<\infty$
then \eqref{intp} always holds. Indeed, in such case we can find a sequence $r_j\nearrow\infty$ such that \Be
r_j\geq 2 r_{j-1}\mand \sup p(r_j)\leq C.
\label{auxp1}
\Ee
Then, for $r\in(r_{j-1},r_j)$ we have $r^\ga p(r)\leq r_j^\ga p(r_j)$ and hence
\Be
\int_{r_{j-1}}^{r_j}\frac{dr}{p(r)} \geq \int_{r_{j-1}}^{r_j}\frac{r^\ga dr}{r_j^\ga p(r_j)} \geq c_\ga\frac{r_j^{\ga+1}-r_{j-1}^{\ga+1}}{C\,r^\ga_j}\geq c'_\ga r_j\to\infty.
\label{auxp7}
\Ee
Thus, we may assume that $\lim_{r\to\infty}p(r)=\infty$. This implies that
the set $\{r\mid \bp(r)=p(r)\}$
is necessarily unbounded (because $\min_{[j,\infty)}p$ is always attained at some $r_j\geq j$, and then $p(r_j)=\bp(r_j)$).
We now follow the construction in \cite[p. 16]{Tac}. Let $\ell_0=1$, and let $\ell_j$ be the smallest real number such that
\Be
\ell_j\geq 2\ell_{j-1}\mand \bp(\ell_j)=p(\ell_j).
\label{ellj}
\Ee 
Clearly,
\[
\int_{\ell_{j-1}}^{\ell_j}\frac{dr}{\bp(r)}\geq \frac{\ell_j-\ell_{j-1}}{\bp(\ell_j)}\geq \frac{\ell_j/2}{p(\ell_j)}.
\]
On the other hand, since by construction $\bp(r)=\bp(\ell_j)$ when $r\in(2\ell_{j-1},\ell_j)$, we also have
\[
\int_{\ell_{j-1}}^{\ell_j}\frac{dr}{\bp(r)}  =  \int_{\ell_{j-1}}^{2\ell_{j-1}}+\int_{2\ell_{j-1}}^{\ell_j}\ldots
\leq 
\frac{\ell_{j-1}}{\bp(\ell_{j-1})}+\frac{\ell_j-2\ell_{j-1}}{\bp(\ell_j)} \leq \frac{\ell_{j-1}}{p(\ell_{j-1})}+\frac{\ell_j}{p(\ell_j)}.
\]
Thus, we conclude that
\Be
\int_1^\infty\frac{dr}{\bp(r)}=\infty \quad \iff\quad \sum_{j=1}^\infty \frac{\ell_j}{p(\ell_j)}=\infty.
\label{intp=sum}
\Ee
Finally, if we further assume that $r^\ga p(r)$ is non-decreasing, then
\Be
\int_{\ell_{j-1}}^{\ell_j}\frac{dr}{p(r)}= \int_{\ell_{j-1}}^{\ell_j}\frac{r^\ga dr}{r^\ga p(r)}
\geq \frac{(\ell_j^{\ga+1}-\ell_{j-1}^{\ga+1})/(\ga+1)}{\ell_j^\ga p(\ell_j)}\geq c_\ga \frac{\ell_j}{p(\ell_j)},
\label{intg}
\Ee
so from  \eqref{intp=sum} we obtain \eqref{intp}. 
\end{proof}

For our proof below we need one more auxiliary result.

\begin{lemma}
\label{Lem2}
Let $p(r)$ be as in the statement of Lemma \ref{Lem1}, and for fixed $\la>0$  define $p_1(r)=p(r)+\la r$.
Then
\Be
\int_1^\infty\frac{dr}{\bp_1(r)}=\infty \quad \iff\quad \int_1^\infty\frac{dr}{\bp(r)}=\infty.
\label{intp1}
\Ee
\end{lemma}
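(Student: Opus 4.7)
The plan is to apply Lemma~\ref{Lem1} and use the discrete characterization \eqref{intp=sum} to reduce the question to a comparison of series. Observe first that the hypotheses of Lemma~\ref{Lem1} are preserved by $p_1$: indeed $r^\ga p_1(r) = r^\ga p(r) + \la r^{\ga+1}$ is non-decreasing, as a sum of two non-decreasing functions. In particular Lemma~\ref{Lem1} and the identity \eqref{intp=sum} apply to both $p$ and $p_1$.

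One direction of \eqref{intp1} is immediate: for every $s\geq r$ one has $p(s)+\la s\geq p(s)$, and taking the infimum over $s\geq r$ gives $\bp_1(r)\geq \bp(r)$. Hence $1/\bp_1\leq 1/\bp$, so $\int_1^\infty dr/\bp=\infty$ whenever $\int_1^\infty dr/\bp_1=\infty$.

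For the converse, I would assume $\int_1^\infty dr/\bp=\infty$, which by \eqref{intp=sum} amounts to $\sum_j \ell_j/p(\ell_j)=\infty$, where $\{\ell_j\}$ is the sequence constructed in the proof of Lemma~\ref{Lem1}. The key step is to test the infimum defining $\bp_1$ at $s=\ell_j$: for every $r\in[\ell_{j-1},\ell_j]$ this gives $\bp_1(r)\leq p(\ell_j)+\la \ell_j$, so, using $\ell_j\geq 2\ell_{j-1}$,
\[
\int_{\ell_{j-1}}^{\ell_j}\frac{dr}{\bp_1(r)}\;\geq\;\frac{\ell_j-\ell_{j-1}}{p(\ell_j)+\la \ell_j}\;\geq\;\frac{\ell_j}{2\bigl(p(\ell_j)+\la \ell_j\bigr)}.
\]
I would then split the index set as $J_A=\{j:p(\ell_j)\geq \la \ell_j\}$ and $J_B=\{j:p(\ell_j)<\la \ell_j\}$. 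On $J_A$ each term above is $\geq \ell_j/\bigl(4p(\ell_j)\bigr)$, and on $J_B$ each term is $\geq 1/(4\la)$. If $J_B$ is infinite, summing over $J_B$ alone forces $\int_1^\infty dr/\bp_1=\infty$; otherwise $J_A$ is cofinite in $\N$, so the divergent series $\sum_j \ell_j/p(\ell_j)$ is still divergent when restricted to $J_A$, yielding the same conclusion.

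The main obstacle is the absence of a clean pointwise comparison of the form $\bp_1(r)\leq C(\bp(r)+\la r)$: the infimum defining $\bp_1(r)$ may be attained at some $s^*\gg r$, in which case $\bp_1(r)$ is much larger than $\bp(r)+\la r$. The detour through the sequence $\{\ell_j\}$ sidesteps this by systematically testing at $s=\ell_j$. A brief initial reduction to the case $\lim_{r\to\infty} p(r)=\infty$, as in the proof of Lemma~\ref{Lem1}, handles the degenerate situation $\liminf_{r\to\infty} p(r)<\infty$: there $\bp$ is eventually bounded, and choosing a doubling sequence $s_j\nearrow \infty$ with $p(s_j)$ bounded gives $\bp_1(r)\leq C+\la s_j$ on $[s_{j-1},s_j]$, whence the same telescoping estimate shows $\int_1^\infty dr/\bp_1=\infty$.
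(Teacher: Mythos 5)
Your proof is correct and follows essentially the same route as the paper: both arguments reduce the degenerate case $\liminf_{r\to\infty}p(r)<\infty$ via a doubling sequence, and otherwise use the sequence $\{\ell_j\}$ from Lemma \ref{Lem1} together with the characterization \eqref{intp=sum}, concluding by the same dichotomy on whether $p(\ell_j)\lesssim \ell_j$ for infinitely many $j$. The only cosmetic difference is that you test the infimum defining $\bp_1$ directly at $s=\ell_j$, whereas the paper first invokes Lemma \ref{Lem1} for $p_1$ to pass to $\int dr/p_1$ and then estimates as in \eqref{intg}; the substance is identical.
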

\begin{proof}
Note that $p_1(r)$ also satisfies the conditions of Lemma \ref{Lem1}, so it suffices to show that
\[
\int_1^\infty\frac{dr}{p_1(r)}=\infty \quad \iff\quad \int_1^\infty\frac{dr}{p(r)}=\infty.
\]
The implication ``$\Rightarrow$'' is trivial since $p_1(r)\geq p(r)$. We now show ``$\Leftarrow$''.
If $\liminf p(r)<\infty$, then picking a sequence $r_j$ as in \eqref{auxp1} and arguing as in \eqref{auxp7}
we obtain
\[
\int_{r_{j-1}}^{r_j}\frac{dr}{p_1(r)}\geq c_\ga\frac{r_j^{\ga+1}}{r^\ga_j(p(r_j)+\la r_j)}
\geq c_\ga\frac{r_j}{C+\la r_j}\geq c_\ga',
\]
which implies the result. So we may assume that $\lim p(r)=\infty$, and using the same numbers $\ell_j$ as in the proof of Lemma \ref{Lem1} we have
\[
\int_1^\infty\frac{dr}{p_1(r)} =  \sum_{j=1}^\infty \int_{\ell_{j-1}}^{\ell_j}\frac{r^\ga dr}{r^\ga (p(r)+\la r)}
\geq  c_\ga  \sum_{j=1}^\infty  \frac{\ell_j}{p(\ell_j)+\la \ell_j},
\]
arguing in the right inequality as in \eqref{intg}. Consider the set of indices
\[
J=\{j\in\SN\mid p(\ell_j)\leq \ell_j\}.\]
If $J$ is an infinite set then
\[
\int_1^\infty\frac{dr}{p_1(r)} 
\geq  c_\ga  \sum_{j\in J}  \frac{\ell_j}{p(\ell_j)+\la \ell_j}\geq  c_\ga  \sum_{j\in J}  \frac{\ell_j}{(1+\la)\ell_j}=\infty.
\]
On the contrary, if $J$ is finite, then there exists some $j_0$ such that $p(\ell_j)>\ell_j$ for all $j\geq j_0$. Thus
\[
\int_1^\infty\frac{dr}{p_1(r)} 
\geq  c_\ga  \sum_{j\geq j_0}  \frac{\ell_j}{p(\ell_j)+\la \ell_j}\geq  \tfrac{c_\ga}{1+\la}  \sum_{j\geq j_0}  \frac{\ell_j}{p(\ell_j)}=\infty,
\]
the last equality due to \eqref{intp=sum}.
\end{proof}


\

We can now state a characterization result. The statements for $\Tou$ seem new in the literature,
while the sufficient condition for $\Th$ is a special case of the more general setting in \cite{Hayne}.

\begin{theorem} \label{UniqT}
Let $p(r)$ be positive and continuous for $r\geq1$, and assume that $r^\ga p(r)$ is non-decreasing for some $\ga\geq0$. 
Then, the following assertions are equivalent for each of the operators  $L=\Dt-2x\cdot\nabla$ and $L=\Dt-|x|^2$.

\sline (a) the equation \eqref{P} has at most one classical solution
$u$ in the class $\mathscr{C}(p)$ 

\sline (b) the function $p$ satifies
\Be
\int_1^\infty\frac{dr}{p(r)}=\infty.
\label{intbpb}
\Ee
\end{theorem}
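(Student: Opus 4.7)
The plan is to reduce Theorem \ref{UniqT} to T\"acklind's classical theorem \ref{heat-T} via the two transference principles of Section \ref{S_trans}: the diffeomorphism $\varphi$ relating $\T$ with $\Tou$ (see \eqref{Li}), and the multiplicative conjugation $U=e^{nt+|x|^2/2}V$ relating $\Tou$ with $\Th$ (see \eqref{H-sii-OU}). For $(b)\Rightarrow(a)$ in the OU case, I consider $U=U_1-U_2\in\Tou$, which vanishes at $t=0$ and satisfies $|U(x,t)|\leq 2A\,e^{|x|p(|x|)}$ for $|x|\geq 1$. Fixing any $T_1<T_0$ and letting $u(y,s):=U\big(\varphi^{-1}(y,s)\big)$ on $\SR^n\times(0,S_1)$ with $S_1=(1-e^{-4T_1})/4$, relation \eqref{Li} shows $u$ is a classical heat solution with $u(\cdot,0)\equiv 0$. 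The dilation bound $(1-4s)^{-1/2}\leq e^{2T_1}$ together with the $r^\ga$-monotonicity yields $|u(y,s)|\leq 2A\,e^{|y|q(|y|)}$ for $|y|\geq 1$, with $q(r):=e^{2T_1\max(\ga,1)}\,p(e^{2T_1}r)$; this $q$ still satisfies $r^\ga q(r)$ non-decreasing and $\int_1^\infty dr/q=\infty$, so Lemma \ref{Lem1} gives $\int_1^\infty dr/\bar q=\infty$, and Theorem \ref{heat-T} forces $u\equiv 0$, hence $U\equiv 0$ on $\SR^n\times(0,T_1)$. Letting $T_1\nearrow T_0$ completes this case.

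For $(b)\Rightarrow(a)$ in the Hermite case, I apply the conjugation $U=e^{nt+|x|^2/2}V$ to $V=V_1-V_2$, obtaining $U\in\Tou$ with zero initial data and, on $\SR^n\times[0,T_1]$,
\[
|U(x,t)|\;\leq\;2A\,e^{nT_1}\,e^{|x|p_1(|x|)},\qquad p_1(r):=p(r)+r/2.
\]
Since $r^\ga p_1(r)=r^\ga p(r)+r^{\ga+1}/2$ is non-decreasing and Lemma \ref{Lem2} (with $\la=1/2$) transports the integral condition from $p$ to $p_1$, the OU uniqueness just proved applies to $U$ with growth function $p_1$ and forces $V\equiv 0$ on $\SR^n\times(0,T_1)$; again $T_1\nearrow T_0$ closes the case.

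For the converse $(a)\Rightarrow(b)$ I argue by contrapositive: assuming $\int_1^\infty dr/p<\infty$, Lemma \ref{Lem1} gives $\int_1^\infty dr/\bar p<\infty$, and the non-uniqueness part of Theorem \ref{heat-T} supplies a non-trivial classical heat solution $u$ on $\SR^n\times(0,\infty)$ with zero initial data, whose growth function may be taken as a constant rescaling $p/C$ (preserving the integral condition, since the rescaling is by a constant) so that $U:=u\circ\varphi$ becomes a non-trivial element of $\Tou\big(\SR^n\times(0,T_0)\big)$ lying in $\mathscr{C}(p)$ with zero initial data, witnessing the failure of uniqueness. For the Hermite case I then take $V:=e^{-nt-|x|^2/2}U$; by \eqref{H-sii-OU} this $V$ lies in $\Th$, vanishes at $t=0$, remains non-trivial, and its Gaussian prefactor provides ample margin to keep it in $\mathscr{C}(p)$.

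The main obstacle is the careful tracking of the growth function under $\varphi$: the factor $|y|/\sqrt{1-4s}$ blows up as $s\to 1/4$, which forces the sub-interval argument $T_1\nearrow T_0$ in the forward implications and an appropriate rescaling of $p$ in the converse. A subtler point in the Hermite reduction is that the prefactor $e^{|x|^2/2}$ injects a quadratic term into the growth exponent, and only Lemma \ref{Lem2}---ensuring that adding a linear term $\la r$ to $p$ preserves the integral condition---permits the reduction to the OU uniqueness with the enlarged growth function $p_1$.
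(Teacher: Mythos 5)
Your proposal is correct and follows essentially the same route as the paper: reduce to T\"acklind's Theorem \ref{heat-T} via the transference $T$ of \eqref{TuU} for the OU case and the conjugation \eqref{VU} for the Hermite case, controlling the transformed growth functions with Lemmas \ref{Lem1} and \ref{Lem2}, and using the T\"acklind counterexample pushed forward by $T$ for the converse. The only differences are cosmetic bookkeeping (your exhaustion $T_1\nearrow T_0$ versus the paper's reduction to $T_0<\infty$, and your constant rescaling of $p$ in the converse, which the paper avoids by noting that $\varphi$ contracts $|x|$ for $t>0$ so the bound $e^{|x|\bar p(|x|\vee 1)}\le e^{|x|p(|x|)}$ holds directly).
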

\begin{proof} 
\indent {\it Case $L = \Dt-2x\cdot\nabla$.} Assume first that property (b) holds.
We must show that every continuous OU-temperature $U(x,t)$ in $\SR^n\times[0,T_0)$ such that $U(x,0)\equiv0$ and  which belongs to the class $\bC(p)$ is identically zero. We may assume that $T_0<\infty$, and we let $s_0=(1-e^{-4T_0})/4$. 
Using \eqref{TuU}, we consider  the function 
\[
u=T^{-1}U\in\T(\SR^n\times(0,s_0)) 
\]
which is continuous in $\SR^n\times[0,s_0)$ and satisfies $u(y,0)\equiv0$. We only need to show that $u\in\bC(p_2)$, for a suitable 
$p_2$ with 
\Be
\int_1^\infty \frac{dr}{\bp_2(r)}=\infty,
\label{intp3}
\Ee
since then T\"acklind's Theorem \ref{heat-T} will imply that $u\equiv 0$
(and hence also $U\equiv0$). 
Let $y=x/\exp(2t)$ and $s=(1-e^{-4t})/4$ with $s\in(0,s_0)$ and $|y|\geq1$. Then, in view of \eqref{TuU},  if we set $\al=\exp(2T_0)$,
we have
\begin{align*}
|u(y,s)| & = \, |U(x,t)| \, \leq \, A\,e^{e^{2t}|y|p(e^{2t}|y|)}\\ 
           & \leq \,A\,e^{\al^{\ga}|y|p(\al|y|)}\, =\, A\, e^{ |y| \, p_2(|y|)},  
\end{align*}
with $p_2(r)=\al^\ga p(\al r)$. Then $u\in \bC(p_2)$, where $p_2(r)$ satisfies the usual conditions and
 by Lemma \ref{Lem1} also \eqref{intp3}. This concludes the proof of this part.

We next show that (a) $\Rightarrow$ (b). Suppose by contradiction that $\int_1^\infty \frac{dr}{\bp(r)}<\infty$. 
Then, following T\"acklind's work \cite[(35)]{Tac} one may construct a function $u\in\T(\SR^n\times\SR)$ such that
$u(y,0)\equiv0$, $u\not\equiv0$ and
\Be
\sup_{s\in\SR}|u(y,s)|\leq \,e^{|y|\,\bp(|y|\vee 1)}, \quad \forall\;y\in\SR^n.
\label{Tacbound}
\Ee
We again use \eqref{TuU} to define the function
\Be
U=Tu \in\Tou(\SR^n\times\SR), 
\label{Utac}
\Ee
which satisfies $U(x,0)\equiv0$ and $U\not\equiv0$. 
If we show that $U\in\bC(p)$ we would reach a contradiction with (a). But this is immediate from \
\Be
|U(x,t)|  = 
\Big|u\Big(\tfrac x{e^{2t}},\tfrac{1-e^{-4t}}4\Big)\Big|  \leq \;e^{\frac{|x|}{e^{2t}}\bp(\frac{|x|}{e^{2t}}\vee 1)}
\leq \;e^{|x|\bp(|x|\vee1)},\quad t>0,\;x\in\SR^n.
\label{Utacp}
\Ee

{\it Case $L = \Dt-|x|^2$.} We show that (b) $\Rightarrow$ (a). Let $V\in\Th\cap \bC(p)$ be continuous in $\SR^n\times[0,T_0)$ 
and with $V(x,0)\equiv 0$. We must show that $V\equiv0$ in $\SR^n\times(0,T_0)$, for which we may assume that $T_0<\infty$. 
By \eqref{VU} and \eqref{H-sii-OU}, the function
\[
U(x,t)=e^{nt} e^{\frac{|x|^2}2} V(x,t)\in \Tou\cap C(\SR^n\times[0,T_0))\mand U(x,0)\equiv0.
\]
Using that $V\in\bC(p)$ we see that, for $t\in(0,T_0)$ and $|x|\geq1$,
\[
|U(x,t)|\leq e^{nT_0}\,e^{\frac{|x|^2}2}\,A\,e^{|x|\,p(|x|)}\,= \, A'\, e^{|x|\,p_1(|x|)},
\]
with $p_1(r)=r/2+p(r)$. Thus, $U\in \bC(p_1)$ and by Lemma \ref{Lem2} we have $\int_1^\infty dr/\bp_1(r)=\infty$, so
the previous case gives that $U\equiv0$ (hence also $V\equiv0$).

We finally prove the converse implication (a) $\Rightarrow$ (b), 
assuming again for contradiction that (b) fails. Consider the function $U(x,t)$ constructed in \eqref{Utac} using the T\"acklind example in \eqref{Tacbound}. 
 Arguing once again as in section \ref{S_trans}, we define
\[
V(x,t)=e^{-nt}e^{-\frac{|x|^2}2}\,U(x,t),
\]
which belongs to $\Th(\SR^n\times\SR)$ and satisfies $V(x,0)\equiv0$, $V\not\equiv0$ and
\Be
|V(x,t)| \leq  |U(x,t)| \leq\,e^{|x|\bp(|x|\vee 1)},\quad t>0, \;x\in\SR^n.
\label{Vtacp}
\Ee
 Thus $V\in \bC(p)$, which is in contradiction with (a).
\end{proof}

We now turn to Theorem \ref{th_uni1}, where we shall use an additional argument to replace the sufficient condition in \eqref{Ta}
by a unilateral bound. We need the following maximum principle, shown in \cite[Theorem III]{Hayne} in a general setting
which includes the H-operator (but not the OU-operator).

\begin{theorem}[Hayne]\label{hay}
Let $L$ be one of the operators in \eqref{ops}.
Let $U\in C^{2,1}(\SR^n\times(0,T_0))\cap C(\SR^n\times[0,T_0))$ be such that
\Benu
\item[(i)] $U_t\leq LU$ in $\SR^n\times(0,T_0)$
\item[(ii)] $U(x,0)\leq 0$, $x\in\SR^n$
\item[(iii)] $U(x,t)\leq Ae^{|x|p(|x|)}$, $|x|\geq1$, $t\in(0,T_0)$,
\Eenu
where $p(r)$ is positive continuous with $r^\ga p(r)$ non-decreasing for some $\ga\geq0$, and
\[
\int_1^\infty\frac{dr}{p(r)}=\infty.
\]  
Then, $U(x,t)\leq 0$ in $\SR^n\times[0,T_0)$.
\end{theorem}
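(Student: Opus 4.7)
The plan is to reduce the novel case $L = \Dt - 2x\cdot\nabla$ (the Ornstein--Uhlenbeck operator, for which \cite{Hayne} does not apply) to the heat-operator case $L = \Dt$, which is covered by \cite[Theorem III]{Hayne}, via the transference $T$ from \eqref{TuU}. The remaining Hermite case $L = \Dt - |x|^2$ is already inside Hayne's framework, but can alternatively be bootstrapped from the OU case through the correspondence $U = e^{nt}e^{|x|^2/2}V$ of \eqref{VU}, exactly as was done in the proof of Theorem \ref{UniqT}. I would first reduce to $T_0 < \infty$, which is harmless since $U(x,t)\le 0$ at any given $(x,t)$ with $t<T_0$ would follow from applying the theorem on any slab $\SR^n\times(0,T')$ with $t<T'<T_0$. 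Setting $s_0 := (1-e^{-4T_0})/4 \in (0,1/4)$ and $u := T^{-1}U$, the subtemperature transference \eqref{Lis} gives $u \in \sT(\SR^n\times(0,s_0))$, while continuity of $u$ and $u(y,0)=U(y,0)\le 0$ are automatic. The only non-trivial task left is to translate the unilateral bound (iii) into a T\"acklind-type growth condition on $u$.

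This translation is the step I expect to be the most delicate, because the time-change $\phi$ dilates the spatial variable by the factor $(1-4s)^{-1/2}$, which is only bounded once $T_0$ has been forced to be finite. Writing $(x,t)=\phi^{-1}(y,s)$ one has $|x|=|y|/\sqrt{1-4s}$, so for $s\in(0,s_0)$ and $|y|\ge 1$ the estimate $|y|\le |x|\le \alpha|y|$ holds with $\alpha:=e^{2T_0}\ge 1$. The non-decrease of $r^\gamma p(r)$ then yields $|x|^\gamma p(|x|)\le (\alpha|y|)^\gamma p(\alpha|y|)$, from which an elementary manipulation (splitting $|x|p(|x|) = |x|^{1-\gamma}\cdot |x|^\gamma p(|x|)$ and bounding $|x|^{1-\gamma}$ in either direction according to the sign of $1-\gamma$) produces
\[
|x|p(|x|)\le \alpha^{1+\gamma}\,|y|\,p(\alpha|y|) \;=:\; |y|\,p_2(|y|).
\]
A short verification shows that $p_2(r):=\alpha^{1+\gamma}p(\alpha r)$ is positive and continuous, that $r^\gamma p_2(r)=\alpha(\alpha r)^\gamma p(\alpha r)$ is non-decreasing, and that $\int_1^\infty dr/p_2(r)=\alpha^{-2-\gamma}\int_\alpha^\infty du/p(u)=\infty$, so $u$ fits the hypotheses of Hayne's maximum principle for the heat operator.

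Applying Hayne's theorem to $u$ with $L=\Dt$ then yields $u\le 0$ on $\SR^n\times[0,s_0)$, and transferring back through $T$ gives $U\le 0$ on $\SR^n\times[0,T_0)$, which closes the OU case. For the Hermite operator, one may either cite \cite{Hayne} directly, or deduce it from the OU case just proved: setting $U:=e^{nt}e^{|x|^2/2}V$ produces an OU-subtemperature with $U(x,0)\le 0$ and growth bound $U(x,t)\le A'\exp(|x|p_1(|x|))$ for $p_1(r):=r/2+p(r)$, whose divergence condition $\int_1^\infty dr/p_1(r)=\infty$ is guaranteed by Lemma \ref{Lem2}; the OU case then gives $U\le 0$, and hence $V\le 0$.
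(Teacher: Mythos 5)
Your argument is correct, but it runs in the opposite direction to the paper's. The paper disposes of Theorem \ref{hay} in two lines: it treats the Hermite operator $\Dt-|x|^2$ (and of course $\Dt$) as already covered by \cite[Theorem III]{Hayne}, and reduces the remaining OU case to the Hermite case via the pointwise multiplier $V=e^{-nt}e^{-|x|^2/2}U$ of \eqref{VU}; since this factor lies in $(0,1]$ for $t\ge 0$, hypotheses (i)--(iii) transfer to $V$ verbatim with no change of gauge, and $V\le 0$ gives $U\le 0$. You instead reduce OU to the \emph{heat} case through the diffeomorphism $\phi$ of \eqref{TuU}, which forces you to first truncate to $T_0<\infty$ and then to absorb the spatial dilation $|x|=|y|/\sqrt{1-4s}\in[|y|,\al|y|]$ into a new gauge $p_2(r)=\al^{1+\ga}p(\al r)$; your verification here is sound (the exponent $1+\ga$ correctly covers both $\ga\le 1$ and $\ga>1$, and $\int_1^\infty dr/p_2=\infty$ follows because $p$ is continuous and positive on the compact set $[1,\al]$). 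Your route is longer but buys a genuinely lighter citation: combined with your bootstrap of the Hermite case from the OU case via $U=e^{nt}e^{|x|^2/2}V$ and Lemma \ref{Lem2}, it derives all three operators from Hayne's theorem for the classical heat operator alone, i.e.\ essentially from the one-sided T\"acklind-type principle for $\Dt$, whereas the paper leans on the general (unbounded zero-order coefficient) setting of \cite{Hayne} to handle $\Dt-|x|^2$ directly. Incidentally, your gauge translation mirrors the one carried out in the paper's proof of Theorem \ref{UniqT} for the OU case, where the constant is recorded as $\al^{\ga}$; your $\al^{1+\ga}$ is the safer choice, since for $\ga<1$ the factor $\al^\ga$ alone does not suffice.
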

\begin{proof}
The cases $L=\Dt$ and $L=\Dt-|x|^2$ are contained in the settings of \cite[Theorem III]{Hayne}.
So we only show how to derive the result for $L=\Dt-2x\cdot\nabla$. Suppose that $U\in\sTou$ satisfies (i)-(iii) above.
As in \S\ref{S_trans}, define 
\[
V(x,t)=e^{-nt} e^{-\frac{|x|^2}2} U(x,t),
\]
which also belongs to $\sTh$, and clearly satisfies (i)-(iii) (with $U$ replaced by $V$).
Then the H-case gives $V\leq 0$, and hence also $U\leq 0$ in $\SR^n\times[0,T_0)$.
\end{proof}

\subsection*{{\it Proof of Theorem \ref{th_uni1}}}
The statements of the converse implications are contained in the proof of Theorem \ref{UniqT}; see \eqref{Utacp} and \eqref{Vtacp},
so we only explain how to derive the direct implications. Suppose that $U\in \Tou$ (or $U\in \Th$) 
is continuous in $\SR^n\times[0,T_0)$ and
satisfies $U(x,0)=0$ and the upper condition in \eqref{uniTa}. Then, Hayne's Theorem \ref{hay}
implies that $U(x,t)\leq 0$ in all $\SR^n\times[0,T_0)$. 
That is we have
\[
U\in\Tou, \quad U\leq 0 \;\;{\rm in }\;\;\SR^n\times[0,T_0)\mand U(x,0)\equiv0.
\]
But these are the conditions of Widder uniqueness theorem for negative temperatures, 
which in the general version given by Aronson and Besala \cite{AB67} will imply that $U\equiv0$.
This also applies if $U\in\Th$, as both H and OU temperatures are covered in the setting of \cite{AB67}.
\ProofEnd

\


\section{Harnack inequalities} \label{Harnack}


A well-known use of mean value formulas is to establish Harnack-type inequalities. The procedure to do so for classical temperatures $u\in\T$ 
in a set $E\subset \SR^n\times\SR$ is explained in detail in Watson's book; see \cite[\S 1.7]{watsonbook}. A crucial step is to replace  
the weight function $K_{x,t}(y,s)=2^{-1}|x-y|^2/(t-s)^2$ in the mean value formula \eqref{clasMV}, by an expression which 
does not blow-up when $(y,s)\to(x,t)$ within the ball $\O(x,t;r)$. This can be done by regarding $u$ as a temperature in $\SR^{n+m}\times\SR$ and 
deriving a formula by the method of descent; see \cite[(1.23)]{watsonbook}. 
The procedure will change 
slightly the shape of the balls, which are  given by  $(y,s)\in \O_m(x,t;r)$ iff
\Be
|y-x|^2<2(n+m)(t-s)\ln\tfrac r{t-s},\quad s\in(t-r,t),
\label{Om1}
\Ee
and it also leads to slightly more complicated kernels
\Be
K^m_{x,t}(y,s)=c_{m}\,\Big(\frac{|x-y|^2}{(t-s)^2}+a_{n,m}\frac{\ln\tfrac r{t-s}}{t-s}\Big)\, R^m,
\label{Km}
\Ee
with $R^2=[2(n+m)(t-s)\ln\tfrac r{t-s}-|x-y|^2]/r$;
see \cite[p.19]{watsonbook}. If $m\geq 3$, the multiplication by the factor $R^m$ implies that
\Be
\sup_{r>0}\sup_{(x,t)\in\SR^{n+1}}\sup_{(y,s)\in\O_m(x,t;r)} r\,K^{m}_{x,t}(y,s) <\infty.
\label{supOm}
\Ee
Such constructions are also possible for temperatures in $\Tou$, and produce the following
improvement over Theorem \ref{main}.

\begin{theorem}\label{MVF2}
There exist sets $\Xi(x,t;r)$, for $(x,t)\in\R^{n+1}$ and $r>0$, 
and positive kernels $K^{\rm OU}_{x,t}(y,s)$ in $C^\infty\big(\R^n\times(-\infty,t)\big)$
satisfying
\Be
A_{n}=\sup_{r>0}\sup_{(x,t)\in\SR^{n+1}}\sup_{(y,s)\in\Xi(x,t;r)} r\,e^{2(n+2)s} K^{\rm OU}_{x,t}(y,s)<\infty,
\label{Arn}
\Ee
and such that Theorem \ref{MV-OU} holds for these kernels.
\end{theorem}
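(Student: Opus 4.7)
The plan is to mimic the strategy outlined in the paragraph preceding the theorem: apply Watson's method of descent at the level of classical temperatures to obtain the improved kernels \eqref{Km} over balls \eqref{Om1}, and then transfer this formula to OU-temperatures via the diffeomorphism $\phi$ of \eqref{phi}, exactly as in the proof of Theorem \ref{MV-OU}.

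First, I would fix an integer $m\geq 3$ and invoke the classical descent formula: for $u\in\T(\phi(E))$ one has
\Be
u(\tx,\tit)=\frac{1}{(4\pi r)^{\frac n2}}\iint_{\O_m(\tx,\tit;r)} u(\ty,\ts)\,K^m_{\tx,\tit}(\ty,\ts)\,d\ty\,d\ts
\Ee
whenever $\overline{\O}_m(\tx,\tit;r)\subset\phi(E)$; see \cite[Ch.~1]{watsonbook}. The choice $m\geq 3$ is precisely what makes the factor $R^m$ in \eqref{Km} absorb the singularity of $|x-y|^2/(t-s)^2$ at the parabolic boundary and at the apex $(x,t)$, so that the uniform bound \eqref{supOm} holds.

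Given $U\in\Tou(E)$, by \eqref{Li} the function $u=T^{-1}U$ lies in $\T(\phi(E))$, so I can apply the formula above at $(\tx,\tit)=\phi(x,t)$ and then perform the change of variables $(\ty,\ts)=\phi(y,s)$, exactly as in the proof of Theorem \ref{MV-OU}. Since the Jacobian equals $e^{-2(n+2)s}$, this yields the desired mean value formula with
\[
\Xi(x,t;r):=\phi^{-1}\big(\O_m(\phi(x,t);r)\big)\mand K^{\rm OU}_{x,t}(y,s):=e^{-2(n+2)s}\,K^m_{\phi(x,t)}\big(\phi(y,s)\big).
\]
Smoothness of $K^{\rm OU}_{x,t}$ on $\SR^n\times(-\infty,t)$ is immediate from the smoothness of $K^m$ and $\phi$, and the converse direction (characterizing elements of $\sTou$) transfers by the same argument used for Theorem \ref{MV-OU}.

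Finally, the bound \eqref{Arn} falls out directly from this definition: for $(y,s)\in\Xi(x,t;r)$ we have $\phi(y,s)\in\O_m(\phi(x,t);r)$, hence
\[
r\,e^{2(n+2)s}\,K^{\rm OU}_{x,t}(y,s)\;=\;r\,K^m_{\phi(x,t)}\big(\phi(y,s)\big)\;\leq\; A_n,
\]
with $A_n$ the constant from \eqref{supOm}, independent of $r$, $(x,t)$ and $(y,s)$. The only delicate step in the argument is the initial descent producing \eqref{Km}, which requires viewing $u(x,t)$ as constant in $m$ auxiliary spatial variables and carefully integrating out those dummy variables in the classical $(n+m)$-dimensional mean value formula; once this is granted from \cite{watsonbook}, the passage to the OU setting is purely mechanical, since $\phi$ is smooth and its Jacobian exactly cancels the exponential weight appearing in \eqref{Arn}.
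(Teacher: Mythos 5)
Your proposal is correct and follows essentially the same route as the paper: the paper's proof likewise invokes Watson's descent formula \cite[Theorem 1.25]{watsonbook} with the balls \eqref{Om1} and kernels \eqref{Km} (taking $m=3$), defines $K^{\rm OU}_{x,t}$ via \eqref{KKphi}, and deduces \eqref{Arn} from \eqref{supOm} exactly as you do.
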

\begin{proof}
The same proof given in \S\ref{ProofMV}  works here replacing \eqref{utxtt} by the mean value formula in 
\cite[Theorem 1.25]{watsonbook}, based in the balls $\O_m(\tilde{x},\tilde{t};r)$ in \eqref{Om1}
and the kernels $K^m_{\tx,\tit}$ in \eqref{Km}. Taking $m=3$, and defining the kernels $K^{\rm OU}_{x,t}$ by 
the formula \eqref{KKphi}, all the assertions follow easily.
\end{proof}

From Theorem \ref{MVF2} and the arguments in \cite[\S1.7]{watsonbook} (or by a direct change of variables), one can 
obtain the following Harnack inequality.

\begin{corollary}
Let $E\subset\SR^n\times \SR$, $\mu$ a measure on $E$, and $K$ a compact set such that
\Be
K\subset \bigcup_{P\in\supp\mu} \La(P,E).
\label{KLa}
\Ee
Then there exists a constant $\kappa=\kappa(E,\mu,K)>0$ 
such that
$$ \max_{K} U \;\leq \,\kappa\, \int U d \mu, \quad \forall\;U\in \Tou(E)\quad \mbox{with }\quad U\geq0. $$
\end{corollary}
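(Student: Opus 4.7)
The plan is to transfer the statement to the classical heat equation via the diffeomorphism $\phi$ of \eqref{phi}, invoke Watson's Harnack inequality for non-negative temperatures, and pull back. This is the route suggested by the parenthetical ``by a direct change of variables'' in the statement of the corollary, and it is cleaner than reproducing the chaining argument of \cite[\S1.7]{watsonbook} directly in the $\Xi$-ball geometry, since the factors $e^{-2(n+2)s}$ appearing in the kernel bound \eqref{Arn} are sidestepped.

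First I would set $\tilde E := \phi(E)$, $\tilde K := \phi(K)$, and $\tilde \mu := \phi_{*}\mu$, the pushforward measure on $\tilde E$. Since $\phi$ is a homeomorphism, $\tilde E$ is open, $\tilde K$ is compact, and $\supp \tilde\mu = \phi(\supp\mu)$. By Remark \ref{remark1}, $\phi(\La(P,E)) = \La(\phi(P), \tilde E)$ for every $P\in E$, so the covering hypothesis $K\subset\bigcup_{P\in\supp\mu} \La(P,E)$ transports to
\[
\tilde K \;\subset\; \bigcup_{\tilde P\in\supp\tilde\mu} \La(\tilde P, \tilde E).
\]
Given $U\in\Tou(E)$ with $U\geq 0$, the equivalence \eqref{Li} gives that $u := T^{-1}U = U\circ\phi^{-1}$ is a non-negative element of $\T(\tilde E)$. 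Watson's classical Harnack inequality for non-negative temperatures (see \cite[\S1.7]{watsonbook}) then supplies a constant $\tilde\kappa = \tilde\kappa(\tilde E, \tilde\mu, \tilde K) > 0$ such that $\max_{\tilde K} u \leq \tilde\kappa \int u\, d\tilde\mu$.

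To conclude, the plan is to translate back. Since $U = u\circ\phi$, one has $\max_K U = \max_{\tilde K} u$, and, by the defining property of the pushforward,
\[
\int u \, d\tilde\mu \;=\; \int u\circ\phi \, d\mu \;=\; \int U\, d\mu.
\]
Setting $\kappa := \tilde\kappa$ then yields the claim. The only subtle point, and the step I expect to be the main obstacle, is to verify that the Harnack statement in \cite[\S1.7]{watsonbook} applies in exactly the generality needed here (arbitrary open $\tilde E$, arbitrary positive measure $\tilde\mu$, and compact $\tilde K$ covered only through the $\La$-sets). This is essentially the content of the integral Harnack principle derived there from a mean value formula with uniformly bounded kernel — the classical precursor of Theorem \ref{MVF2} — so the argument transfers verbatim.
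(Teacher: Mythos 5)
Your argument is correct and is precisely the ``direct change of variables'' route that the paper itself offers parenthetically for this corollary: transport $E$, $K$, $\mu$ and the $\La$-covering through $\phi$ (using Remark \ref{remark1}), apply Watson's integral Harnack theorem for non-negative classical temperatures, and pull back via the pushforward identity. This bypasses Theorem \ref{MVF2} entirely, which is legitimate here since the bounded-kernel formula is only needed if one reruns Watson's chaining argument directly in the $\Xi$-ball geometry; nothing further is required.
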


\begin{remark}{\rm 
In the special case when $\mu=\dt_{(x_0,t_0)}$ and $K\subset\La(x_0,t_0,E)$ one obtains
$$ \max_{K} U \;\leq \,\kappa\, \, U(x_0,t_0), \quad \forall\;U\in \Tou(E)\quad \mbox{with }\quad U\geq0.  $$
As it is expected from a parabolic equation, this is an \emph{interior} Harnack inequality, in the sense that
the $t$-location of the set $K$ must be \emph{strictly} below the point $(x_0,t_0)$.
}
\end{remark}

We finally state a Harnack-type inequality with no time separation, but with a different right hand side. 
The result in \eqref{mintq} below, for parameters $q\geq1$, could be obtained directly from Theorem \ref{MVF2}. We
give, however, a stronger formulation valid for all $q>0$. This formulation, for $u\in\sT$ and 
for standard cylinders $C_r(x_0,t_0)=B_r(x_0)\times(t_0-r^2,t_0)$, 
is contained in the work of Ferretti and Safonov; see \cite[Theorem 3.4]{FS01}, \cite[Thm 3.1]{fer}.
In our setting we shall consider the following ``Hermite-cylindrical'' sets
\[
\Ga_R(x_0,t_0)=\Big\{(x,t)\in\SR^{n+1}\mid \big|e^{2(t_0-t)}x-x_0\big|<R,\quad t\in(t_0-R^2,t_0)\;\Big\}, 
\]
and the measure $d\nu(x,t)=e^{-2(n+2)t}\,dx\,dt$.

\begin{corollary}\label{cor_mintq}
For every $q>0$, there exists a constant $\kappa=\kappa(q,n)>0$ such that 
such that
\Be
 \max_{\Ga_R(x_0,t_0)} U \;\leq \,\kappa\; \Big[\,\mint_{\Ga_{4R}(x_0,t_0)} U^q\,d\nu\,\Big]^{1/q}, 
\label{mintq}
\Ee
for all non-negative $U\in \sTou(E)$ and all  $\;{\overline{\Ga}}(x_0,t_0,4R)\subset E$ with $R\leq 1$.
\end{corollary}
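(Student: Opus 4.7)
The plan is to reduce the statement to the classical $L^q$-mean value estimate of Ferretti-Safonov \cite[Theorem 3.4]{FS01} for subtemperatures in $\sT$ on standard parabolic cylinders, by means of the transference formula \eqref{Lis}. Set $u = T^{-1} U \in \sT(\varphi(E))$, which is non-negative, and let $\tilde{x}_0 = x_0 e^{-2t_0}$, $\tilde{t}_0 = (1-e^{-4t_0})/4$, and $\rho = R\, e^{-2t_0}$. A direct computation using \eqref{phi} gives
\[
\varphi(\Gamma_R(x_0,t_0)) \,=\, B_\rho(\tilde{x}_0) \times (\tilde{t}_0 - \tau_R,\, \tilde{t}_0),\qquad \tau_R \,:=\, \frac{e^{-4t_0}(e^{4R^2}-1)}{4},
\]
and analogously for $\Gamma_{4R}$, with spatial radius $4\rho$ and time length $\tau_{4R} = e^{-4t_0}(e^{64R^2}-1)/4$. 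By the elementary monotonicity of $(e^x-1)/x$, for $R \in (0,1]$ one has $\tau_R \in [\rho^2,\, (e^4/4)\rho^2]$ and $\tau_{4R} \in [16\rho^2,\, (e^{64}/4)\rho^2]$, with bounds depending only on the dimension (indeed, independent of $R$ and $t_0$).

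The geometric crux is to choose $r_1 = e^2\rho/2$ and $r_2 = 4\rho$ and verify the sandwich
\[
\varphi(\Gamma_R) \,\subset\, C_{r_1}(\tilde{x}_0, \tilde{t}_0), \qquad C_{r_2}(\tilde{x}_0, \tilde{t}_0) \,\subset\, \varphi(\Gamma_{4R}),
\]
where $C_r(\tilde{x}_0,\tilde{t}_0) := B_r(\tilde{x}_0) \times (\tilde{t}_0 - r^2, \tilde{t}_0)$; these inclusions follow from $\rho \leq r_1$, $\tau_R \leq r_1^2 = (e^4/4)\rho^2$, $r_2 = 4\rho$, and $r_2^2 = 16\rho^2 \leq \tau_{4R}$. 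The ratio $r_2/r_1 = 8/e^2 > 1$ is an absolute constant, and since $\overline{\Gamma_{4R}} \subset E$ implies $\overline{C_{r_2}} \subset \varphi(E)$, I may invoke the Ferretti-Safonov $L^q$-mean value inequality for non-negative $u \in \sT$ (stated in loc.\ cit.\ for a fixed enlargement ratio, but extendable to any $\theta > 1$ by a standard iteration):
\[
\sup_{C_{r_1}} u \,\leq\, C(q,n) \left[\mint_{C_{r_2}} u^q \,d\tilde{y}\,d\tilde{s}\right]^{1/q},\qquad q > 0.
\]

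To finish, I would change variables back using $U = u \circ \varphi$ and the Jacobian identity $d\tilde{y}\,d\tilde{s} = e^{-2(n+2)s}\,dy\,ds = d\nu(y,s)$. Combining
\[
\int_{C_{r_2}} u^q \,d\tilde{y}\,d\tilde{s} \;\leq\; \int_{\varphi(\Gamma_{4R})} u^q \,d\tilde{y}\,d\tilde{s} \;=\; \int_{\Gamma_{4R}} U^q \,d\nu
\]
with the uniformly bounded volume ratio $|\varphi(\Gamma_{4R})|/|C_{r_2}| = \tau_{4R}/(16\rho^2) \in [1,\, e^{64}/64]$ yields the stated inequality with $\kappa(q,n)$ depending only on $q$ and $n$. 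The main obstacle is the tight numerical coincidence $r_2/r_1 = 8/e^2 > 1$: any constant smaller than $e^2/2 \approx 3.7$ in place of the factor $4$ in $\Gamma_{4R}$ would make this transference strategy fail, uniformly in $R \in (0,1]$ and $t_0 \in \R$. All remaining steps are elementary, once the monotonicity of $(e^x-1)/x$ has been used to control the $\tau$-quantities.
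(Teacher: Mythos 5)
Your proof is correct and follows essentially the same route as the paper: transfer to a classical subtemperature $u=T^{-1}U$ via $\varphi$, trap $\varphi(\Gamma_R)$ and $\varphi(\Gamma_{4R})$ between standard parabolic cylinders whose radius ratio is an absolute constant slightly above $1$, apply the Ferretti--Safonov $L^q$-estimate, and undo the change of variables using that the Jacobian is exactly $d\nu$. The only (minor) difference is that you identify $\varphi(\Gamma_R)$ \emph{exactly} as the cylinder $B_\rho(\tilde{x}_0)\times(\tilde{t}_0-\tau_R,\tilde{t}_0)$ of modified height, whereas the paper merely sandwiches it as $C_r\subset\varphi(\Gamma_R)\subset C_{\lambda r}$ with $\lambda=\sqrt{(e^4-1)/4}<4$; the tightness you flag in the ratio $8/e^2>1$ appears there in the same way as $4/\lambda>1$.
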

\begin{proof}
We begin with the following observation:
if $(y_0,s_0)=\phi(x_0,t_0)$ and $r=Re^{-2t_0}$ with $R\leq 1$, then
\Be
C_r(y_0,s_0)\,\subset \,\phi\big(\Ga_R(x_0,t_0)\big)\,\subset\,C_{\la r}(y_0,s_0),
\label{Gincl}
\Ee
for some $\la<4$. To see this, first note that, if $(y,s)=\phi(x,t)$,
\[
|y-y_0|<r \quad\iff\quad \big|e^{2(t_0-t)}x-x_0\big|<re^{2t_0}=R.
\]
On the other hand
\Be
s_0-s=e^{-4t_0}(e^{4(t_0-t)}-1)/4\geq e^{-4t_0}(t_0-t),
\label{sos}
\Ee
So if $s_0-s\leq r^2$ then $t_0-t\leq R^2$, and the left inclusion in \eqref{Gincl} holds, actually for all $R>0$.
For the right inclusion we use the convexity inequality 
\Be
e^z-1\leq \la^2 z, \quad z\in(0,4), \quad \mbox{with $\la^2=(e^4-1)/4<14$.}
\label{exla}\Ee
Then, if $t_0-t\leq R^2$ and $R\leq 1$, from \eqref{sos} we see that  $s_0-s\leq (\la r)^2$.
Thus, \eqref{Gincl} holds, and from here it also follows that
\[
\nu\big(\Ga_R(x_0,t_0)\big)=\big|\phi\big(\Ga_R(x_0,t_0)\big)\big|\approx r^{n+2}\approx (Re^{-2t_0})^{n+2}.
\]
We now use  \cite[Theorem 3.4]{FS01} and the change of variables in \eqref{TuU}, to obtain
\[
\sup_{\Ga_R}U=\sup_{\phi(\Ga_R)}u\leq \sup_{C_{\la r}}u\lesssim \Big[\,\mint_{C_{4r}} u^q\,\Big]^{1/q}\lesssim \Big[\,\mint_{\Ga_{4R}(x_0,t_0)} U^q\,d\nu\,\Big]^{1/q}.
\]
\end{proof}

As a corollary, we deduce a uniqueness criterion which is slightly less restrictive than Theorem \ref{UniqT} above. 
The proof is similar to \cite[Thm 3.2]{fer}.

\begin{corollary}
Let $U\in\Tou(\SR^n\times(0,T_0))\cap C(\SR^n\times[0,T_0))$ be such that $U(x,0)\equiv0$.
Suppose that for some $q>0$ and for some $p(r)$ as in Theorem \ref{UniqT} with $\int_1^\infty dr/p(r)=\infty$
it holds
\[
\int_0^{T_0}\int_{\SR^n}|U(x,t)|^q\,e^{-|x|p(|x|\vee 1)}\,dx\,dt<\infty.
\]
Then $U\equiv0$.
\end{corollary}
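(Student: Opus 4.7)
The plan is to convert the weighted $L^q$ hypothesis on $U$ into a pointwise exponential growth bound that fits the framework of Theorem \ref{th_uni1}, whose application then forces $U\equiv 0$.

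First, I would observe that $U^2\in\sTou$: a direct computation gives $(U^2)_t-(\Dt-2x\cdot\nabla)(U^2)=-2|\nabla U|^2\leq 0$ for any $U\in\Tou$. Applying Corollary \ref{cor_mintq} to the non-negative OU-subtemperature $U^2$ with exponent $q/2$, at any point $(x_0,t_0)\in\SR^n\times(0,T_0)$ and for any $R\leq 1$ with $\overline{\Ga_{4R}}(x_0,t_0)\subset\SR^n\times(0,T_0)$, yields
\[
|U(x_0,t_0)|^2\;\leq\;\kappa'\,\Big[\mint_{\Ga_{4R}(x_0,t_0)}|U|^q\,d\nu\Big]^{2/q}.
\]
Using $\nu(\Ga_{4R})\asymp R^{n+2}e^{-2(n+2)t_0}$ (as in the proof of Corollary \ref{cor_mintq}), the uniform boundedness of $e^{-2(n+2)(t-t_0)}$ on $\Ga_{4R}$ (since $t_0-t\leq 16R^2\leq 16$), and pulling out the weight $e^{-|x|p(|x|\vee 1)}$ via the inclusion $|x|\leq|x_0|+4R$ on $\Ga_{4R}$, this reduces to
\[
|U(x_0,t_0)|\;\leq\;C\,R^{-(n+2)/q}\,I^{1/q}\,\exp\!\big(H(|x_0|+4R)/q\big),
\]
where $H(r):=\sup_{1\leq s\leq r}sp(s)$ and $I:=\int_0^{T_0}\!\!\int_{\SR^n}|U|^qe^{-|x|p(|x|\vee 1)}\,dx\,dt<\infty$ by hypothesis.

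Writing this as $|U(x_0,t_0)|\leq C_{t_0}e^{|x_0|\tilde p(|x_0|)}$ with $\tilde p(r):=H(r+4)/(qr)$ for $r\geq 1$, I would verify that $\tilde p$ satisfies the hypotheses of Theorem \ref{th_uni1}: the product $r\tilde p(r)=H(r+4)/q$ is manifestly nondecreasing, giving the monotonicity condition with exponent $\ga=1$; and the T\"acklind condition $\int_1^\infty dr/\tilde p(r)=\infty$ is inherited from $\int_1^\infty dr/p(r)=\infty$ via a substitution $s=r+4$ and an envelope argument analogous to Lemma \ref{Lem1}, exploiting the hypothesis that $s^\ga p(s)$ is nondecreasing.

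The main obstacle is the non-uniformity in $t$ of the constant $C_{t_0}$. With the natural choice $R(t_0)=\min(1,\sqrt{t_0}/8)$ (which ensures $\overline{\Ga_{4R}}\subset\SR^n\times(t_0/2,t_0]$), one has $C_{t_0}\sim t_0^{-(n+2)/(2q)}$, blowing up as $t_0\to 0^+$ and preventing a direct application of Theorem \ref{th_uni1}. To remove this I would exploit the initial condition $U(\cdot,0)\equiv 0$: by letting $R$ approach $\sqrt{t_0}/4$, the cylinder $\Ga_{4R}(x_0,t_0)$ extends arbitrarily close to $t=0$, so the right hand side is dominated by $R^{-(n+2)}e^{H(|x_0|+4)}\int_0^{t_0}\!\!\int_{\SR^n}|U|^qe^{-|x|p(|x|\vee 1)}\,dx\,dt$, whose temporal factor tends to $0$ as $t_0\to 0^+$ by absolute continuity of the Lebesgue integral. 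A bookkeeping argument---first establishing $U\equiv 0$ on an initial strip $\SR^n\times[0,\epsilon_0)$, then applying Theorem \ref{th_uni1} to the time-translated temperature $V(x,t):=U(x,t+\epsilon_0)$ (which has vanishing initial datum thanks to the first step)---converts the pointwise estimate into the uniform-in-$t$ bound $|U(x,t)|\leq A\,e^{|x|\tilde p(|x|)}$ on $\{|x|\geq 1\}\times(0,T_0)$, at which point Theorem \ref{th_uni1} yields $U\equiv 0$ on $\SR^n\times[0,T_0)$.
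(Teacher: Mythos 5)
Your overall route is the same as the paper's: observe $U^2\in\sTou$, apply Corollary \ref{cor_mintq} to $U^2$ with exponent $q/2$ to turn the weighted $L^q$ bound into a pointwise exponential bound, and then invoke a T\"acklind-type uniqueness theorem (the paper uses the two-sided Theorem \ref{UniqT}, which suffices here, rather than the one-sided Theorem \ref{th_uni1}). However, two of your intermediate steps have genuine gaps.

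First, the extraction of the weight via the one-sided envelope $H(r)=\sup_{1\leq s\leq r}sp(s)$ and the resulting $\tilde p(r)=H(r+4)/(qr)$ loses the T\"acklind condition when $\gamma>1$: the implication $\int_1^\infty dr/p=\infty\Rightarrow\int_1^\infty r\,dr/H(r)=\infty$ is \emph{false} in general. For instance, with $\gamma=2$, $a_1=2$, $a_{j+1}=a_j^2$, and $p(r)=a_j^5j^2/r^2$ on $[a_j,a_{j+1})$ (smoothed at the junctions, where $p$ jumps up), one has $r^2p(r)$ non-decreasing and $\int_{a_j}^{a_{j+1}}dr/p\approx a_j/(3j^2)$, so $\int dr/p=\infty$; yet $H(r)\equiv a_jp(a_j)=a_j^4j^2$ on $[a_j,a_{j+1})$, whence $\int_{a_j}^{a_{j+1}}r\,dr/H(r)\leq 1/(2j^2)$ and $\int r\,dr/H<\infty$. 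Your argument would then be unable to close for such $p$, even though the corollary's hypothesis holds. The paper avoids this by using the \emph{two-sided} comparability $c_1|x_0|\leq|x|\leq 2|x_0|$ on $\Ga_{4R}(x_0,t_0)$ for $|x_0|$ large (the lower bound matters), together with the monotonicity of $s^\gamma p(s)$, to get $|x|p(|x|)\leq c_2|x_0|p(2|x_0|)$; this yields $p_2(r)=c_3p(2r)$, for which $\int dr/p_2=\infty$ is immediate by substitution.

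Second, your patch for the degeneration as $t_0\to0^+$ does not work as stated: with $R\asymp\sqrt{t_0}$ the normalizing factor of the mean value is $\nu(\Ga_{4R})^{-1}\asymp R^{-(n+2)}\asymp t_0^{-(n+2)/2}$, which blows up faster than absolute continuity forces $\int_0^{t_0}\!\!\int|U|^qe^{-|x|p}\,dx\,dt$ to vanish (the latter is only $o(1)$, and can be of order $t_0$). So the product need not tend to zero, and the claim that $U\equiv0$ on an initial strip is unsupported; the subsequent time-translation step therefore has nothing to stand on. (The paper is itself terse on this point, taking $R=1$ throughout; the standard way to legitimize that is to extend $U$ by zero to $t\leq0$ --- the extension remains a subtemperature in the weak sense precisely because $U(\cdot,0)\equiv0$ and $U^2\geq0$ --- after which Corollary \ref{cor_mintq} applies with $R=1$ uniformly in $t_0$ and the average of $|U|^q\mathbf{1}_{\{t>0\}}$ is controlled by the hypothesis.) Both gaps are repairable, but as written the proof does not go through for general $\gamma$ nor near $t=0$.
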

\begin{proof}
We may assume that $T_0<\infty$. Observe that $U^2\in\sTou$ (by direct computation, or from Theorem \ref{MV-OU} and H\"older's inequality).
Applying Corollary \ref{cor_mintq} to $U^2$ with $q/2$, $|x_0|\geq2$ $t_0\in(0,T_0)$ and $R=1$, we obtain
\Beas
|U(x_0,t_0)|^q & \lesssim & \mint_{\Ga_1(x_0,t_0)}|U(x,t)|^q\,e^{-|x|p(|x|\vee1)}\, e^{|x|p(|x|\vee1)}\,d\nu(x,t)\\
& \leq  & C_{T_0}\,\int_0^{T_0}\int_{\SR^n}|U(x,t)|^q\,e^{-|x|p(|x|\vee 1)}\,dx\,dt\,e^{c_2|x_0|p(2|x_0|)},
\Eeas
using that for $(x,t)\in \Ga_R(x_0,t_0)$ it holds $c_1|x_0|\leq |x|\leq 2|x_0|$. Thus, $U\in\bC(p_2)$ with 
$p_2(r)=c_3p(2r)$. Since $\int_1^\infty dr/p_2(r)=\infty$, the result follows from Theorem \ref{UniqT}.
\end{proof}

\section*{Acknowledgments}
Part of this work was carried out during a visit of the second author to the Isaac Newton Institute for Mathematical Sciences, Cambridge (UK).
We wish to thank the INI for the support and hospitality during this visit.

\bibliographystyle{amsplain}

\end{document}